\newtheorem{theorem}{Theorem}
\newtheorem{definition}[theorem]{Definition}
\newtheorem{lemma}[theorem]{Lemma}
\newtheorem{proposition}[theorem]{Proposition}
\newtheorem{corollary}[theorem]{Corollary}
\newtheorem{conjecture}{Conjecture}
\newenvironment{proof}{\emph{Proof:}$\\$}{$\\\Box\\$}
\title{Characterizing Strongly First Order Dependencies: The Non-Jumping Relativizable Case
}
\author{Pietro Galliani
\institute{Free University of Bozen-Bolzano, Italy}
\email{Pietro.Galliani@unibz.it}
}
\newcommand{\tuple}{\mathbf}
\newcommand{\FO}{\texttt{FO}}
\newcommand{\M}{\mathfrak M}
\newcommand{\A}{\mathfrak A}
\newcommand{\B}{\mathfrak B}
\newcommand{\D}{\mathbf D} 
\newcommand{\DD}{\mathcal D}
\newcommand{\E}{\mathbf E}
\newcommand{\EE}{\mathcal E}
\newcommand{\F}{\mathbf F}
\newcommand{\nonempty}{\texttt{NE}}
\newcommand{\Dmax}{\mathbf D_{\textbf{max}}}
\newcommand{\Fmax}{\mathbf F_{\textbf{max}}}
\newcommand{\dom}{\texttt{Dom}}
\begin{document}
\maketitle

\begin{abstract}
Team Semantics generalizes Tarski's Semantics for First Order Logic by allowing formulas to be satisfied or not satisfied by sets of assignments rather than by single assignments. Because of this, in Team Semantics it is possible to extend the language of First Order Logic via new types of atomic formulas that express \emph{dependencies} between different assignments. 

Some of these extensions are much more expressive than First Order Logic proper; but the problem of which atoms can instead be added to First Order Logic without increasing its expressive power is still unsolved. 

In this work, I provide an answer to this question under the additional assumptions (true of most atoms studied so far) that the dependency atoms are \emph{relativizable} and \emph{non-jumping}. Furthermore, I show that the global (or Boolean) disjunction connective can be added to any strongly first order family of dependencies without increasing the expressive power, but that the same is not true in general for non strongly first order dependencies.  
\end{abstract}
\section{Introduction}
	Team Semantics \cite{hodges97,vaananen07} generalizes Tarski's Semantics for First Order Logic by letting formulas be satisfied or not satisfied by sets of assignments (called \emph{teams}) rather than just by single assignments. This semantics was originally developed by Hodges in \cite{hodges97} in order to provide a compositional semantics for Independence-Friendly Logic \cite{hintikkasandu89,mann11}, an extension of First Order Logic that generalizes its game-theoretic semantics by allowing agents to have \emph{imperfect information} regarding the current game position;\footnote{In \cite{cameron01}, a combinatorial argument was used to show that a compositional semantics cannot exist for Independence Friendly Logic if we require satisfaction (with respect to a model $\M$) to be a relation between single assignments and formulas. In \cite{galliani11}, this result was extended to the case of infinite models.} but, as observed by V\"a\"an\"anen \cite{vaananen07}, as a logical framework it deserves study in its own right.

	In the case of First Order Logic itself, this semantics is equivalent and reducible to the usual Tarskian semantics, but the higher order nature of its satisfaction relation makes it possible to extend it in new ways. This is of considerable theoretical interest: indeed, Team Semantics may be seen as a tool to describe and classify novel fragments of Second Order Logic, an issue of great importance --- and of deep connections, via Descriptive Complexity Theory, to the theory of computation --- regarding which much is still not known. It is also of more direct practical interest, because of the connections between Team Semantics and Database Theory (see for instance \cite{hannula2016finite,kontinen13}).  Probabilistic variants of Team Semantics have recently gathered attention (see for instance \cite{durand2018approximation,durand2018probabilistic,hannula2018facets}). 

Much of the initial wave of research in this area focused on specific Team Semantics-based extensions of First Order Logic, in particular Dependence Logic \cite{vaananen07} and later Independence Logic \cite{gradel13} and Inclusion Logic \cite{galliani12,gallhella13}; but there are still relatively few \emph{general} results regarding the effects of extending First Order Logic via Team Semantics.\footnote{Examples of results of this type can be found for instance in \cite{kontinen2014decidable,kontinen2016decidability}, which studies the complexity of the finite decidability problem in  First Order Logic plus generalized dependency atoms.}  The simplest way of doing so, for example, is by introducing \emph{generalized dependency atoms} $\D$ that express dependencies between different assignments; and it is a consequence of the higher order nature of Team Semantics that, even if $\D$ itself is first order definable (as a property of relations), the logic $\FO(\D)$ obtained by adding it to First Order Logic with Team Semantics may well be much more expressive than First Order Logic. 

	A natural question would then be: can we find necessary and sufficient conditions for that \emph{not} to happen? In other words, for which dependency atoms $\D$ or families of dependency atoms $\DD$ is it true that every sentence of $\FO(\D)$ (resp. $\FO(\DD)$, that is, the logic obtained by adding \emph{all} atoms $\D \in \DD$ to First Order Logic) is equivalent to some first order sentence? An answer to this would be of clear theoretical interest, as part of the before-mentioned programme of using Team Semantics to describe and classify fragments of Second Order Logic, since it would give us necessary and sufficient conditions for such fragments to be more expressive than First Order Logic; and it would also be of more practical interest, as it would allow us to find out which families of dependencies can be added to the language of First Order Logic while guaranteeing that all the convenient meta-logical properties of First Order Logic still hold.

This, however, has not been answered yet. In \cite{galliani2015upwards}, a very general family of dependencies was found that does not increase the expressive power of First Order Logic if added to it; but it is an open question whether any dependency that has this property is definable in terms of dependencies in that family (and, in fact, in this work we will show that this is false). 

Building on recent work in \cite{galliani19characterizing} on the classification of downwards closed dependencies, this work provides a partial answer to this under two additional assumptions, namely that such a dependency is \emph{relativizable} (Definition \ref{def:relativ}) and \emph{non-jumping} (Definition \ref{def:nonjump}). These are natural properties that are true of essentially all the strongly first order dependency atoms studied so far, and of most types of dependencies that are of interest; and thus, for those dependencies, the results of this work completely answer the above question. Additionally, a simple result concerning \emph{global} (or \emph{Boolean}) disjunctions in Team Semantics will be proved along the way --- as a necessary tool for the main result --- that may be seen as a preliminary step towards the study of such questions in the more general case of \emph{operators} (rather than mere atoms) in Team Semantics. 
\section{Preliminaries} 
In Team Semantics, formulas are satisfied or not satisfied by sets of assignments (called \emph{teams}) rather than by single assignments as in Tarskian semantics:
%
%The following definitions provide the basic framework of this semantics: 
\begin{definition}[Teams --- \cite{vaananen07}]
Let $\M$ be a first order model with domain $M$ and let $V$ be a set of variable symbols. Then a \emph{team} $X$ with domain $\dom(X) = V$ is a set of assignments $s: V \rightarrow M$. 
\end{definition}
\begin{definition}[Relation Corresponding to a Team --- \cite{vaananen07}]
Given a team $X$ and a tuple $\tuple v = v_1 \ldots v_k$ of variables occurring in its domain, we write $X(\tuple v)$ for the $k$-ary relation $\{(s(v_1) \ldots s(v_k)) : s \in X\}$. 
\end{definition}

\begin{definition}[Team Duplication --- \cite{vaananen07}]
Given a team $X$ over $\M$ and a tuple of pairwise distinct variables $\tuple y = y_1 \ldots y_k$ (which may or may not occur already in the domain of $X$), we write $X[M/\tuple y]$ for the team with domain $\dom(X) \cup \{y_1 \ldots y_k\}$ defined as 
\[
X[M/\tuple y] = \{s[m_1 \ldots m_k/y_1 \ldots y_k] : s \in X, (m_1 \ldots m_k) \in M^k\}
\]
where, as usual, $s[m_1 \ldots m_k/y_1 \ldots y_k]$ is the result of extending/modifying $s$ by assigning $m_1 \ldots m_k$ to $y_1 \ldots y_k$.
\end{definition}

\begin{definition}[Team Supplementation --- \cite{vaananen07}]
Given a team $X$ over $\M$, a tuple of distinct variables $\tuple y = y_1 \ldots y_k$ (which may or may not occur already in the domain of $X$) and a function $H: X \rightarrow \mathcal P(M^k) \backslash \{\emptyset\}$ assigning to each $s \in X$ a nonempty set of tuples of elements of $\M$, we write $X[H/\tuple y]$ for the team with domain $\dom(X) \cup \{y_1 \ldots y_k\}$ defined as 
\[
X[H/\tuple y] = \{s[m_1 \ldots m_k/y_1 \ldots y_k] : s \in X, (m_1 \ldots m_k) \in H(s)\}.
\]

As a special case of supplementation, if $\tuple a = a_1 \ldots a_k$ is a tuple of elements of the model we write $X[\tuple a/\tuple y]$ for $\{s[a_1 \ldots a_k/y_1 \ldots y_k]: s \in X\}$.
\end{definition}

\begin{definition}[Team Semantics for First Order Logic --- \cite{vaananen07}]
Let $\M$ be a first order model with at least two elements\footnote{We need at least two elements in our model in order to encode disjunctions in terms of existential quantifications in Proposition \ref{propo:emax} and Theorem \ref{thm:psitheta}. The case in which only one element exists is in any case trivial, and may be dealt with separately if required.}, let $\phi$ be a First Order formula over its signature in Negation Normal Form\footnote{As is common in the study of Team Semantics, we will generally assume that all expressions are in Negation Normal Form.}, and let $X$ be a team over $\M$ with domain containing the free variables of $\phi$. Then we say that $\phi$ is satisfied by $X$ in $\M$, and we write $\M \models_X \phi$, if this is a consequence of the following rules: 
\begin{description}
\item[TS-lit:] For all first order literals $\alpha$, $\M \models_X \alpha$ if and only if, for all $s \in X$, $\M \models_s \alpha$ in the usual sense of Tarskian Semantics; 
\item[TS-$\vee$:] For all $\psi_1$ and $\psi_2$, $\M \models_X \psi_1 \vee \psi_2$ iff there exist teams $Y_1, Y_2 \subseteq X$ such that $X = Y_1 \cup Y_2$,\footnote{We do not require $Y_1$ and $Y_2$ to be disjoint.} $\M \models_{Y_1} \psi_1$ and $\M \models_{Y_2} \psi_2$; 
\item[TS-$\wedge$:] For all $\psi_1$ and $\psi_2$, $\M \models_X \psi_1 \wedge \psi_2$ iff $\M \models_X \psi_1$ and $\M \models_X \psi_2$; 
\item[TS-$\exists$:] For all $\psi$ and all variables $v$, $\M \models_X \exists v \psi$ iff there exists some function $H:X \rightarrow \mathcal P(M)\backslash \{\emptyset\}$ such that $\M \models_{X[H/v]} \psi$; 
\item[TS-$\forall$:] For all $\psi$ and all variables $v$, $\M \models_X \forall v \psi$ iff $\M \models_{X[M/v]} \psi$. 
\end{description}

Given a sentence $\phi$ and a model $\M$ whose signature contains that of $\phi$, we say that $\phi$ is true in Team Semantics if and only if $\M \models_{\{\epsilon\}} \phi$, where $\{\epsilon\}$ is the team containing the only assignment $\epsilon$ over the empty set of variables. 
\label{def:teamsemantics}
\end{definition}

As mentioned in the Introduction, with respect to First Order Logic proper Team Semantics is equivalent and reducible to Tarskian Semantics. More precisely, it can be shown by structural induction that 
\begin{proposition}[\cite{vaananen07}]
For all first order formulas $\phi$, models $\M$ and teams $X$, $\M \models_X \phi$ if and only if, for all assignments $s \in X$, $\M \models_s \phi$ according to Tarskian Semantics. 

In particular, if $\phi$ is a first order sentence then $\phi$ is true in $\M$ in the sense of Team Semantics if and only if it is true in $\M$ in the sense of Tarskian Semantics. 
\label{propo:FOL}
\end{proposition}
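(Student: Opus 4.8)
The plan is a routine structural induction on $\phi$, establishing both directions of the biconditional simultaneously. It is worth observing at the outset that the right-hand side ``$\M \models_s \phi$ for all $s \in X$'' is vacuously true when $X = \emptyset$, so the induction will in particular yield that the empty team satisfies every first order formula; this fact is needed implicitly but requires no separate argument, as it falls out of the induction automatically.

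For the base case, if $\phi$ is a literal $\alpha$ the claim is exactly the rule \textbf{TS-lit}. For conjunction, $\M \models_X \psi_1 \wedge \psi_2$ holds iff $\M \models_X \psi_1$ and $\M \models_X \psi_2$, which by the induction hypothesis means every $s \in X$ satisfies both $\psi_1$ and $\psi_2$, i.e. every $s \in X$ satisfies $\psi_1 \wedge \psi_2$. The universal case is similar: $\M \models_X \forall v \psi$ iff $\M \models_{X[M/v]} \psi$ iff (induction hypothesis) every assignment in $X[M/v]$ satisfies $\psi$, and since $X[M/v] = \{s[m/v] : s \in X, m \in M\}$ this says precisely that every $s \in X$ satisfies $\forall v \psi$ in the Tarskian sense.

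The cases requiring a little more care are disjunction and existential quantification, where the delicate point is the right-to-left direction, in which one must manufacture a witnessing decomposition (resp.\ choice function) out of pointwise information. For $\psi_1 \vee \psi_2$: from left to right, given $X = Y_1 \cup Y_2$ with $\M \models_{Y_i} \psi_i$, every $s \in X$ lies in some $Y_i$ and hence, by the induction hypothesis, satisfies $\psi_i$, so it satisfies $\psi_1 \vee \psi_2$; conversely, taking the \emph{maximal} decomposition $Y_i = \{s \in X : \M \models_s \psi_i\}$ gives $X = Y_1 \cup Y_2$ (each $s$ satisfies at least one disjunct) and, by the induction hypothesis, $\M \models_{Y_i} \psi_i$ — choosing $Y_i$ maximally avoids any appeal to choice. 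The existential case is analogous: from left to right, if $H$ witnesses $\M \models_{X[H/v]} \psi$, then for $s \in X$ any $m \in H(s)$ gives $s[m/v] \in X[H/v]$, which satisfies $\psi$ by the induction hypothesis, so $s$ satisfies $\exists v\,\psi$; conversely, set $H(s) = \{m \in M : \M \models_{s[m/v]} \psi\}$, which is nonempty for each $s \in X$ exactly because each such $s$ satisfies $\exists v\,\psi$, and then every assignment in $X[H/v]$ satisfies $\psi$, so the induction hypothesis gives $\M \models_{X[H/v]} \psi$ and hence $\M \models_X \exists v\,\psi$.

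Finally, the statement about sentences is the special case $X = \{\epsilon\}$: on a one-element team ``$\M \models_s \phi$ for all $s \in X$'' collapses to Tarskian truth of $\phi$ in $\M$. I do not expect any genuine obstacle; the only points needing attention are arranging the induction so that the empty-team case is subsumed, and choosing maximal witnesses in the $\vee$ and $\exists$ steps so that the backward directions go through cleanly.
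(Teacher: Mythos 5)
Your proof is correct and follows exactly the route the paper indicates (a routine structural induction, cited to V\"a\"an\"anen); the paper itself does not spell out the details. The two points you flag as needing care --- the maximal decomposition for $\vee$ and the maximal choice function for $\exists$ in the right-to-left directions --- are indeed the only non-trivial steps, and you handle them correctly.
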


What is then the point of Team Semantics? In brief, Team Semantics allows us to extend First Order Logic in new ways, like for instance by adding new types of atoms describing \emph{dependencies} between different assignments:

\begin{definition}[Generalized Dependency --- \cite{kuusisto2015}]
Let $k \in \mathbb N$. A $k$-ary \emph{generalized dependency} $\D$ is a class of models\footnote{Here and in the rest of the work, whenever the signature of a model is understood from the context to be of the form $\{R\}$ for some $k$-ary relation symbol $R$, we will write $(M, S)$ -- where $M$ is a set of elements and $S \subseteq M^k$ -- for the model $\M$ over this signature that has domain $M$ and whose interpretation $R^\M$ of the symbol $R$ is exactly $S$. When no ambiguity is possible we will also use the same letter for the relation and the relation symbol, writing e.g. $(M, R)$.} over the signature $\{R\}$, where $R$ is a $k$-ary relation symbol, that is closed under isomorphisms (that is, if $\M_1$ and $\M_2$ are isomorphic and $\M_1 \in \D$ then $\M_2 \in \D$ as well).
	Given a family of such dependencies $\DD = \{\D_1, \D_2, \ldots\}$, we write $\FO(\DD)$ for the language obtained by adding atoms of the form $\D_i \tuple y_i$ to
First Order Logic, where the $\tuple y_i$ range over all tuples of variables of the same arity as $\D_i$, with the satisfaction rules 
\begin{description}
\item[TS-$\D_i$:] $\M \models_X \D_i \tuple y_i$ if and only if $(M, X(\tuple y_i)) \in \D_i$. 
\end{description}
\end{definition}
A case of particular interest is the one in which the class of models describing the semantics of a generalized dependency is itself first order definable: 
\begin{definition}[First Order Generalized Dependency --- \cite{galliani2016strongly}]
A generalized dependency $\D$ is \emph{first order} if and only if there exists a first order sentence $\D(R)$, where $R$ is a relation symbol of the same arity as $\D$, such that 
    $(M, R) \in \D \Leftrightarrow (M, R) \models \D(R)$
for all models $(M, R)$. 
\label{def:FOdep}
\end{definition}
A peculiar aspect of Team Semantics is that, due to the second order existential quantification implicit in its rules for disjunction and existential quantification, first order generalized dependencies can still increase considerably the expressive power of First Order Logic when added to it. For example, the Team Semantics-based logics that have been most studied so far are Dependence Logic \cite{vaananen07}, Independence Logic \cite{gradel13} and Inclusion Logic \cite{galliani12}, that add to First Order Logic respectively 

\begin{description}
\item[Functional Dependence Atoms:] For all tuples of variables $\tuple x$ and $\tuple y$, $\M \models_X =\!\!(\tuple x; \tuple y)$ iff any two $s,s' \in X$ that agree on the value of $\tuple x$ also agree on the value of $\tuple y$; 
\item[Independence Atoms:] For all tuples of variables $\tuple x$, $\tuple y$ and $\tuple z$, $\M \models_X \tuple x \bot_\tuple y \tuple z$ iff for any two $s, s' \in X$ that agree on $\tuple y$ there is some $s'' \in X$ that agrees with $s$ on $\tuple x$ and $\tuple y$ and with $s'$ on $\tuple y$ and $\tuple z$;\footnote{As discussed in \cite{engstrom12}, this atom is closely related to database-theoretic \emph{multivalued dependencies}.} 
\item[Inclusion Atoms:] For all tuples of variables $\tuple x$ and $\tuple y$ of the same length, $\M \models_X \tuple x \subseteq \tuple y$ iff for all $s \in X$ there exists some $s' \in X$ with $s(\tuple x) = s'(\tuple y)$. 
\end{description}

It is easy to see that these three types of dependency atoms are all first order in the sense of Definition \ref{def:FOdep}. However, (Functional) Dependence Logic $\FO(=\!\!(\ldots; \cdot))$ is as expressive as full Existential Second Order Logic, and so is Independence Logic $\FO(\bot)$, whereas Inclusion Logic is equivalent to the positive fragment of Greatest Fixed Point Logic \cite{gallhella13} (and hence, by \cite{immerman82,vardi82}, it captures PTIME over finite ordered models). 

Does this imply that (Functional) Dependence Logic and Independence Logic are equivalent to each other and strictly contain Inclusion Logic? This is not as unambiguous a question as it may appear. It certainly is true that every Inclusion Logic sentence is equivalent to some Independence Logic sentence, that every Dependence Logic sentence is equivalent to some Independence Logic sentence, and that every Independence Logic sentence is equivalent to some Dependence Logic sentence; but on the other hand, it is \emph{not} true that every Inclusion Logic formula, or every Independence Logic one, is equivalent to some Dependence Logic formula. This follows at once from the following classification: 
\begin{definition}[Empty Team Property, Closure Properties --- \cite{vaananen07,galliani12,galliani2015upwards}]
Let $\D$ be a generalized dependency. Then 
\begin{itemize}
    \item $\D$ has the \textbf{Empty Team Property} iff $(M, \emptyset) \in \D$ for all $M$; 
    \item $\D$ is \textbf{Downwards Closed} iff whenever $(M, R) \in \D$ and $R' \subseteq R$ then $(M, R') \in \D$; 
    \item $\D$ is \textbf{Union Closed} iff whenever $\{R_i : i \in I\}$ is a family of relations over some $M$ such that $(M, R_i) \in \D$ for all $i \in I$ then $(M, \bigcup_i R_i) \in \D$;
    \item $\D$ is \textbf{Upwards Closed} iff whenever $(M, R) \in \D$ and $R \subseteq R'$ then $(M, R') \in \D$. 
\end{itemize}
\end{definition}
The first three of the above properties are preserved by Team Semantics:
 \begin{proposition}[Properties preserved by Team Semantics --- \cite{vaananen07,galliani12}]
Let $\DD$ be a family of dependencies, let $\phi(\tuple v) \in \FO(\DD)$ be a formula with free variables in $\tuple v$ and let $\M$ be a first order model. Then
\begin{itemize}
\item If all $\D \in \DD$ have the Empty Team Property then $\M \models_\emptyset \phi$;
\item If all $\D \in \DD$ are downwards closed and $\M \models_X \phi$ then $\M \models_{X'} \phi$ for all $X' \subseteq X$; 
\item If all $\D \in \DD$ are union closed and $\M \models_{X_i} \phi$ for all $i \in I$ then $\M \models_{\bigcup_i X_i} \phi$. 
\end{itemize}
	\label{propo:closure}
\end{proposition}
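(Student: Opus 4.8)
The plan is to establish all three statements by structural induction on $\phi \in \FO(\DD)$; they share a common shape --- ``a closure property of each atom $\D$ propagates to a closure property of $\models$'' --- so the three inductions run on the same skeleton and differ only in bookkeeping. In every case the literal base step is immediate from \textbf{TS-lit} (the condition there is a universal statement over the assignments of the team, hence monotone under the relevant set operations), and the dependency-atom base step for $\D\tuple y$ follows from \textbf{TS-$\D$} together with the hypothesis on $\D$, using $\emptyset(\tuple y) = \emptyset$, $X'(\tuple y) \subseteq X(\tuple y)$ for $X' \subseteq X$, and $(\bigcup_i X_i)(\tuple y) = \bigcup_i X_i(\tuple y)$. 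The conjunction step is trivial in all three cases.

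For the \emph{Empty Team Property}, assuming every $\D \in \DD$ has it, I show $\M \models_\emptyset \phi$: for $\psi_1 \vee \psi_2$ take $Y_1 = Y_2 = \emptyset$; for $\exists v\, \psi$ take $H$ to be the empty function (vacuously well-defined), so $\emptyset[H/v] = \emptyset$ and the induction hypothesis applies; for $\forall v\, \psi$ note $\emptyset[M/v] = \emptyset$. For the downwards-closed case, assuming $\M \models_X \phi$ and $X' \subseteq X$: for $\psi_1 \vee \psi_2$ with $X = Y_1 \cup Y_2$, use $Y_1' := Y_1 \cap X'$ and $Y_2' := Y_2 \cap X'$, which satisfy $Y_1' \cup Y_2' = X'$ and $Y_j' \subseteq Y_j$, so the induction hypothesis gives $\M \models_{Y_1'} \psi_1$ and $\M \models_{Y_2'} \psi_2$; for $\exists v\, \psi$ with witness $H$, restrict $H$ to $X'$ to get $H'$ and note $X'[H'/v] \subseteq X[H/v]$; for $\forall v\, \psi$ use $X'[M/v] \subseteq X[M/v]$. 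For the union-closed case, assuming $\M \models_{X_i} \phi$ for all $i$ in a (nonempty) index set $I$: for $\psi_1 \vee \psi_2$, writing $X_i = Y_i^1 \cup Y_i^2$ with the corresponding properties, set $Y^1 := \bigcup_i Y_i^1$ and $Y^2 := \bigcup_i Y_i^2$, so $Y^1 \cup Y^2 = \bigcup_i X_i$ and the induction hypothesis applied to the families $\{Y_i^1\}$ and $\{Y_i^2\}$ gives $\M \models_{Y^1} \psi_1$ and $\M \models_{Y^2} \psi_2$; for $\forall v\, \psi$ use $(\bigcup_i X_i)[M/v] = \bigcup_i X_i[M/v]$.

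The step I expect to require the most care is the existential-quantifier case of the union-closed induction. Given for each $i \in I$ a witness $H_i : X_i \to \mathcal P(M) \setminus \{\emptyset\}$ with $\M \models_{X_i[H_i/v]} \psi$, the natural common witness is the function $H$ on $\bigcup_i X_i$ defined by $H(s) = \bigcup \{H_i(s) : i \in I \text{ and } s \in X_i\}$ --- this is nonempty for every $s$ since at least one $H_i(s)$ contributes and each is nonempty --- and one then verifies the identity $(\bigcup_i X_i)[H/v] = \bigcup_i X_i[H_i/v]$ by expanding both sides as unions over the assignments $s$ and reindexing the resulting double union. The induction hypothesis applied to the family $\{X_i[H_i/v] : i \in I\}$ then yields $\M \models_{(\bigcup_i X_i)[H/v]} \psi$, hence $\M \models_{\bigcup_i X_i} \exists v\, \psi$. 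A last point of hygiene: the union-closed statement is meant with $I \neq \emptyset$, since for $I = \emptyset$ the conclusion $\M \models_\emptyset \phi$ genuinely requires the Empty Team Property rather than union closure.
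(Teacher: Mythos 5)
Your proof is correct and follows exactly the route the paper indicates: it states that these three facts "are proven easily by structural induction" (deferring to the cited sources), and your induction supplies precisely that argument, with the right witnesses at each connective (intersecting the splitting, restricting or uniting the supplementation functions) and the appropriate caveat about the empty index family in the union-closure clause.
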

From these facts --- that are proven easily by structural induction --- it follows at once that functional dependence atoms (which are downwards closed, but not union closed) cannot be used to define inclusion atoms (which are union closed, but not downwards closed) or independence atoms (which are neither downwards closed nor union closed). Additionally, since all these three types of dependencies have the Empty Team Property we have at once that, even together, they cannot be used to define for instance the \emph{nonemptiness atom} $\nonempty = \{(M, P): P \not = \emptyset\}$, such that 
$\M \models_X \nonempty(v) \text { iff } X(v) \not = \emptyset$.\footnote{The choice of the variable $v$ is of course irrelevant here, and we could have defined $\nonempty$ as a $0$-ary dependency instead; but treating it as a $1$-ary dependency is formally simpler.}

Differently from functional dependence atoms, inclusion atoms and independence atoms, some types of generalized dependencies do not increase the expressive power of First Order Logic when added to it: this is the case, for example, of the $\nonempty$ dependency just introduced. More generally, it was shown in \cite{galliani2015upwards} that if $\mathcal D^\uparrow$ is the set of all \emph{upwards closed} first order dependencies and $=\!\!(\cdot)$ is the \emph{constancy atom} such that $\M \models_X =\!\!(v)$ iff $|X(v)| \leq 1$,\footnote{That is, $\M \models_X =\!\!(v)$ iff for all $s, s' \in X$, $s(v) = s'(v)$.} every sentence of $\FO(\DD^\uparrow, =\!\!(\cdot))$ is equivalent to some first order sentence. In other words, we have that $\DD^\uparrow \cup \{=\!\!(\cdot)\}$ is \emph{strongly first order} according to the following definition: 
\begin{definition}[Strongly First Order Dependencies --- \cite{galliani2016strongly}]
A dependency $\D$, or a family of dependencies $\DD$, is said to be \emph{strongly first order} iff every sentence of $\FO(\D)$ (resp. $\FO(\DD)$) is equivalent to some first order sentence.\footnote{It is worth pointing out here that if $\D$ is strongly first order then it is first order in the sense of Definition \ref{def:FOdep}, because $(M, R) \in \D \Leftrightarrow (M, R) \models \forall \tuple x (\lnot R \tuple x \vee (R \tuple x \wedge \D \tuple x))$. The converse is however not true in general.} 
\end{definition}

Additionally, it is clear that any dependency $\E$ that is definable in $\FO(\DD^\uparrow, =\!\!(\cdot))$, in the sense that there exists some formula $\phi(\tuple v) \in \FO(\DD^\uparrow, =\!\!(\cdot))$ over the empty signature such that $\M \models_X \E \tuple v \Leftrightarrow \M \models_X \phi(\tuple v)$, is itself strongly first order. This can be used, as discussed in \cite{galliani2015upwards}, to show that for instance the negated inclusion atoms 
\[
\M \models_X \tuple x \not \subseteq \tuple y \text{ iff } X(\tuple x) \not \subseteq X(\tuple y)
\]
are strongly first order, as they can be defined in terms of upwards closed first order dependencies and constancy atoms; and as mentioned in \cite{galliani2016strongly}, the same type of argument can be used to show that all first order dependencies $\D(R)$ where $R$ has arity one are also strongly first order. 

This led to the following 

\begin{conjecture}[\cite{galliani18safe}]
	Every strongly first order dependency $\D(R)$ is definable in terms of upwards closed dependencies and constancy atoms.
\label{conj:1}
\end{conjecture}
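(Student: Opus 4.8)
The plan is to start from the observation recorded in the footnote to the definition of strongly first order dependencies: if $\D$ is strongly first order then it is already first order, so it is captured by some first order sentence $\D(R)$ over the signature $\{R\}$. By the defining relationship $\M \models_X \D\tuple v \Leftrightarrow (M, X(\tuple v)) \models \D(R)$, the goal reduces to producing a formula $\phi(\tuple v) \in \FO(\DD^\uparrow, =\!\!(\cdot))$ that, evaluated on a team $X$, tests whether the relation $X(\tuple v)$ satisfies $\D(R)$. The entire difficulty is thus to simulate an arbitrary first order test of $X(\tuple v)$ using only the two primitives at our disposal. The first primitive is \emph{guessing a global constant}: as one checks directly from TS-$\exists$, the formula $\exists w\,(=\!\!(w) \wedge \cdots)$ forces the Skolem function $H$ to take a single shared value, so it names one element $c$ and sets $w = c$ across the whole team. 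The second primitive is \emph{testing a monotone-increasing property} of $X(\tuple v)$, which is exactly what an upwards closed atom does, since the upwards closed first order dependencies are, up to equivalence, those definable by sentences positive in $R$ (Lyndon).

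First I would aim at a normal form for $\D(R)$ that separates its monotone content from its remaining content. The target is to show that being strongly first order forces $\D(R)$ to be equivalent to a sentence of the shape $\exists \tuple c\,\theta(R, \tuple c)$, where $\theta$ is positive in $R$ and $\tuple c$ is a tuple of first order constants (and, if this turns out too weak, to enrich the target form by also permitting the team splits and duplications supplied by TS-$\vee$ and TS-$\forall$). Granting such a decomposition, the translation is a matter of bookkeeping along the lines of \cite{galliani2015upwards}: set $\phi(\tuple v) := \exists \tuple c\,\big(=\!\!(c_1) \wedge \cdots \wedge =\!\!(c_k) \wedge \D_\theta\,\tuple v\,\tuple c\big)$, where the constancy atoms pin the guessed witnesses down to genuine global constants, the extended team records both $X(\tuple v)$ and the chosen $\tuple c$ as columns, and the upwards closed atom $\D_\theta$ verifies the positive core $\theta$ over those columns. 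The empty team property and downward closure of the constancy atoms, together with the upward closure of $\D_\theta$, then let one check by the usual structural induction that $\phi(\tuple v)$ and $\D\tuple v$ agree on every team.

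The crux — and the step I expect to be the main obstacle — is establishing the decomposition $\D(R) \equiv \exists \tuple c\,\theta(R, \tuple c)$ from the mere hypothesis that $\FO(\D)$ collapses to $\FO$ at the level of sentences. The plan here is to use the collapse quantitatively: for each sentence $\Phi \in \FO(\D)$ one obtains an equivalent first order $\Phi^*$, and by running this over a family of \emph{probe} sentences that force $\D$ to be evaluated on teams encoding an arbitrary relation $R$ together with controlled perturbations $R' \supseteq R$ and $R' \subseteq R$, one can read off how membership in $\D$ responds to additions to and deletions from its argument. I would then try to show that any genuinely non-monotone behaviour of $\D$ must be confined to boundedly many \emph{critical} tuples, which can be named by the constants $\tuple c$: intuitively, if unboundedly many deletions could flip membership in $\D$, a suitable probe sentence would encode a non-first-order property such as parity or reachability, contradicting the collapse. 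Turning this intuition into a theorem — pinning down precisely the sense in which the negative, non-monotone content of a strongly first order dependency is \emph{bounded}, and hence expressible by fixing finitely many elements with constancy atoms — is the delicate heart of the argument and is where the bulk of the work, and the real risk of the plan, lies, since constancy can only ever name finitely many values and an irreducibly unbounded negative requirement on $X(\tuple v)$ would defeat the construction.
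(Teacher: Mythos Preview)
The proposal attempts to prove a statement that the paper shows to be \emph{false}. Conjecture~\ref{conj:1} is not proved in the paper; it is refuted, via the dependency $\mathbf U$ given by $\M \models_X \mathbf U v$ iff $X(v) = \emptyset$ or $X(v) = M$. This $\mathbf U$ is strongly first order (it is equivalent to $\bot \sqcup \texttt{All}(v)$, and Proposition~\ref{propo:sqcup} shows that global disjunction is safe for strongly first order dependencies), yet the paper proves via Lemma~\ref{lemma:Xtransmit} that it is not definable in $\FO(\DD^\uparrow, =\!\!(\cdot))$: since $\M \models_\emptyset \mathbf U v$ for every $\M$, any upwards closed atom appearing in a putative defining formula would have to be satisfied by the empty team over every $\M$, hence by \emph{every} team, and could be replaced by $\top$; the residue would lie in $\FO(=\!\!(\cdot))$, which is downwards closed, while $\mathbf U$ is not.

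This counterexample pinpoints exactly where your plan breaks. Your ``crux'' step hopes to show that the non-monotone content of a strongly first order $\D$ is confined to boundedly many critical tuples nameable by constancy atoms. For $\mathbf U$ this is false: the passage from $R = \emptyset$ (in $\mathbf U$) to a singleton $R$ (out of $\mathbf U$) is a downward jump that no fixed finite tuple of parameters can witness uniformly across all models, and your proposed normal form $\exists \tuple c\,\theta(R,\tuple c)$ with $\theta$ positive in $R$ would force $\D$ itself to be upwards closed. The heuristic ``unboundedly many deletions flipping membership would encode a non-first-order property'' does not go through: $\mathbf U$ exhibits exactly such unbounded negative behaviour while remaining strongly first order. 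The paper's remedy is not to push the argument harder but to change the target: it replaces Conjecture~\ref{conj:1} by Conjecture~\ref{conj:1bis}, which adds global disjunction $\sqcup$ to the allowed primitives, and then proves that restricted version under the additional hypotheses of relativizability and non-jumping.
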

In the next section we will show, by a simple argument, that this conjecture is not true as stated; but that it can be recovered (albeit not yet proved) by adding an additional and commonly used connective to the language of Team Semantics. 

We also recall here the following generalization of the notion of strongly first order dependency: 

\begin{definition}[Safe Dependencies \cite{galliani18safe}]
Let $\DD$ and $\EE$ be two families of dependencies. Then we say that $\DD$ is \emph{safe} for $\EE$ iff any sentence of $\FO(\DD, \EE)$ is equivalent to some sentence of $\FO(\EE)$. 
\end{definition}
Clearly, a dependency is strongly first order if and only if it is safe for the empty set of dependencies. However, as shown in \cite{galliani18safe}, a strongly first order dependency is not necessarily safe for \emph{all} families of dependencies: in particular, the constancy atom is not safe for the unary\footnote{This is a binary first order dependency, defined by the sentence $\D(R) = \forall x y (R x y \rightarrow \exists z R z x)$. The term ``unary'' is used here because each ``side'' of the dependency may have only one variable.} inclusion atom $v_1 \subseteq v_2$, in which $v_1$ and $v_2$ must be single variables (rather than tuples of variables).\footnote{As a quick aside, similar phenomena occur in the study of the theory of second-order generalized quantifiers \cite{kontinen2010definability}. This suggests the existence of interesting --- and, so far, largely unexplored --- connections between the theory of second order generalized quantifiers and that of generalized dependency atoms.} On the other hand, in \cite{galliani19characterizing} it was shown that strongly first order dependencies are safe for any family of downwards closed dependencies: 
\begin{theorem}[\cite{galliani19characterizing}, Theorem 3.8]
Let $\DD$ be a family of strongly first order dependencies and let $\EE$ be a family of downwards closed dependencies. Then every sentence of $\FO(\DD, \EE)$ is equivalent to some sentence of $\FO(\EE)$.
\end{theorem}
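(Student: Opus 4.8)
A direct structural induction establishing ``every $\FO(\DD,\EE)$-formula is equivalent to an $\FO(\EE)$-formula'' cannot work: by Proposition \ref{propo:closure}, $\FO(\EE)$ is downward closed whenever $\EE$ is, whereas the formula-level claim already fails for $\DD=\{\nonempty\}$ and $\EE=\emptyset$, since $\nonempty(v)$ is not flat. So the reduction is genuinely a sentence-level phenomenon, and any recursive statement we prove about open formulas must be strictly weaker than logical equivalence yet collapse to it once all variables have been quantified away and we evaluate on the singleton team $\{\epsilon\}$.

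The route I would try first isolates the hypothesis ``$\DD$ strongly first order'' to a single clean step. Given a sentence $\Phi\in\FO(\DD,\EE)$: (i) replace each occurrence of an $\EE$-atom $\E_j\,\tuple y_j$ in $\Phi$ by a first-order literal $S_j\,\tuple y_j$, with $S_j$ a fresh relation symbol of matching arity, obtaining a sentence $\Phi^\circ\in\FO(\DD)$ over the signature $\sigma\cup\{S_1,\dots,S_m\}$; (ii) since $\DD$ is strongly first order, replace $\Phi^\circ$ by an equivalent ordinary first-order sentence $\Phi^\ast$ over $\sigma\cup\{S_1,\dots,S_m\}$; (iii) re-internalize, building an $\FO(\EE)$-sentence $\Psi$ over $\sigma$ that expresses ``there is an evaluation of $\Phi$ in which the relation read off the team at each marked occurrence $j$ may be taken to be some relation $S_j$, the $S_j$ make $\Phi^\ast$ true, and at occurrence $j$ the atom $\E_j$ is satisfied.'' Downward closure of $\EE$ is exactly what makes step (iii) feasible: the only constraint the atom $\E_j$ puts on the team $X_j$ reached at its occurrence is $X_j(\tuple y_j)\in\E_j$, and downward closure turns this into a monotone downward constraint, so the standard bookkeeping tying a guessed relation to a team --- ``the guessed relation contains the team'', via first-order literals, and ``is contained in it'', via the dependency atom --- only has to invoke $\E_j$ in the benign, downward direction; the guessing of the auxiliary relations themselves can be carried out with ordinary first-order quantifiers via the usual team encoding of relations.

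Equivalently, the content of (iii) can be packaged as a strengthened induction hypothesis on open formulas: every $\phi(\tuple v)\in\FO(\DD,\EE)$ is equivalent to $\exists\tuple w\,\big(\chi(\tuple v,\tuple w)\big)$ for some $\chi\in\FO(\EE)$, up to a first-order-definable passage to a distinguished subteam --- the non-flat, $\DD$-originating content of $\phi$ being recorded by that first-order side condition (expressible precisely because $\DD$ is strongly first order), which on the team $\{\epsilon\}$ forced by a sentence becomes vacuous. The inductive cases for first-order literals, $\wedge$ and $\forall$ are routine; the cases for $\vee$ and $\exists$ are where the side condition has to be propagated carefully through the splitting and supplementation of teams.

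The main obstacle is precisely step (iii), equivalently the $\vee$- and $\exists$-cases of the strengthened induction. The $\EE$-atoms of $\Phi$ are not at top level but buried under quantifiers and disjunctions that also carry $\DD$-content, so one cannot just substitute a fixed relation for $X_j(\tuple y_j)$: the team at occurrence $j$ depends on the splitting and supplementation choices made above it, and those choices are themselves constrained by the $\DD$-atoms whose effect has just been dissolved into $\Phi^\ast$. Making the correspondence between successful evaluations of $\Phi$ and interpretations of $S_1,\dots,S_m$ satisfying $\Phi^\ast$ together with the downward $\E_j$-constraints fully precise --- and, crucially, checking that the extra existential guesses one is forced to introduce never interact with $\EE$ except through that benign downward direction --- is the technical heart of the argument.
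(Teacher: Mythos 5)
There is no proof of this statement in the paper for you to be measured against: it is imported verbatim from \cite{galliani19characterizing} (Theorem 3.8), so I can only assess your proposal on its own terms. Your preliminary observations are correct and worth making: the reduction cannot be formula-level (your $\nonempty$ example is exactly right), and your flattening steps (i)--(ii) are sound. Indeed, the equivalence they yield can be made precise: writing $\Phi^\circ$ for the sentence in which each occurrence $j$ of $\E_j\tuple y_j$ is replaced by the literal $S_j\tuple y_j$, one has $\M\models_{\{\epsilon\}}\Phi$ if and only if there exist relations $\tuple S$ with $(M,S_j)\in\E_j$ for all $j$ and $(\M,\tuple S)\models_{\{\epsilon\}}\Phi^\circ$; the left-to-right direction takes $S_j$ to be the value $X_j(\tuple y_j)$ of the unique team reaching occurrence $j$ in a witnessing evaluation, and the right-to-left direction is exactly where downward closure of $\E_j$ enters, since the literal only guarantees $X_j(\tuple y_j)\subseteq S_j$. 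Strong first-orderness then replaces $\Phi^\circ$ by a first-order $\Phi^\ast(\tuple S)$.

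The genuine gap is step (iii), which you yourself flag as ``the technical heart'' and then do not carry out. After (i)--(ii) you are left with a sentence of the form $\exists\tuple S\,\bigl(\bigwedge_j \E_j(S_j)\wedge\Phi^\ast(\tuple S)\bigr)$ --- an existential second-order sentence with $\EE$-membership side conditions --- and the theorem demands an $\FO(\EE)$ sentence, not a $\Sigma^1_1$ one. Your proposed mechanism for re-internalizing (``the guessed relation contains the team, via first-order literals, and is contained in it, via the dependency atom'') does not type-check as stated: the $S_j$ are not in the signature, so no first-order literal can refer to them; they must be simulated by teams built with quantifiers, and plain $\FO(\EE)$ has no general device for forcing a team's projection to equal, or even contain, a guessed relation. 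One must also argue that $\Phi^\ast$ can be taken positive in each $S_j$ (which follows from monotonicity of $\Phi^\circ$ in the $S_j$ plus Lyndon's theorem, but you do not say this), and the alternative ``strengthened induction hypothesis'' you sketch is not stated precisely enough to check its $\vee$ and $\exists$ cases, which you again concede are the hard ones. As it stands the proposal is a plausible reduction of the theorem to its hardest step, not a proof of it.
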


It is also worth mentioning here that the same notions of safety and strong first orderness can be easily generalized to \emph{operators}. For example, in \cite{galliani18safe} it was shown that the \emph{possibility operator} 
\[
    \M \models_X \diamond \phi \text{ iff } \exists Y \subseteq X, Y \not = \emptyset, \text{ s.t. } \M \models_Y \phi
\]
is safe for any collection of dependencies $\DD$, in the sense that every sentence of $\FO(\DD, \diamond)$ is equivalent to some sentence of $\FO(\DD)$. In the next section, we will instead see an example of an operator that is safe for any \emph{strongly first order} collection of dependencies, but that is not safe for some other (non strongly first order, albeit still first order) dependency families. 
\section{The Unsafety and Necessity of Global Disjunction}
A connective often added to the language of Team Semantics is the \emph{global} (or \emph{Boolean})\footnote{The term ``Boolean disjunction'' is most common in the literature, but it may give the wrong impression: as far as the author knows, there is no particular relation between this connective and Boolean algebras. ``Classical Disjunction'' is also a term sometimes used, because of the analogy between this semantics and the usual rule for disjunction; but this may also be misleading, because if we replace $\vee$ with $\sqcup$ in Team Semantics the truth conditions of First Order sentences become different from the usual ones (for instance, $\forall x \forall y (x = y \vee x \not = y)$ is always true, but $\forall x \forall y (x = y \sqcup x \not = y)$ is false in any model with at least two elements). In this work the term ``Global Disjunction'' will be used instead, to emphasize that when evaluating $\sqcup$ the \emph{whole} current team must satisfy one of the disjuncts.} disjunction 

\begin{description}
\item[TS-$\sqcup$:] $\M \models_X \phi \sqcup \psi$ if and only if $\M \models_X \phi$ or $\M \models_X \psi$.
\end{description}

This is different from the disjunction $\vee$ of Definition \ref{def:teamsemantics}: for example, a team $X$ of the form $\{(v:0,w:0),(v:0,w:1)\}$ does not satisfy $v=w \sqcup v \not = w$, although it satisfies $v=w \vee v \not = w$.

It is well known in the literature that, as long as the Empty Team Property holds in our language and the model contains at least two elements, this connective can be expressed in terms of constancy atoms as 
\[
\phi \sqcup \psi \equiv \exists p q (=\!\!(p) \wedge =\!\!(q) \wedge ( (p=q \wedge \phi) \vee (p \not = q \wedge \psi))),
\]
where $p$ and $q$ are two new variables not occurring in $\phi$ or $\psi$. However, this is not enough to guarantee that this connective will not affect the expressive power of a language based on Team Semantics if added to it, because of two reasons: \begin{enumerate}
    \item The empty team property does not necessarily apply to all logics $\FO(\DD)$, and when it does not then the above definition is not necessarily correct; 
    \item As shown in \cite{galliani18safe} and recalled above, the constancy atom itself is not safe for all families of dependencies. 
\end{enumerate}
As we will now see, the following result nonetheless holds: 
\begin{proposition}[Global Disjunction is Safe for Strongly First Order dependencies]
Let $\DD$ be any\\strongly first order family of dependencies, and let $\FO(\DD, \sqcup)$ be the logic obtained by adding to $\FO(\DD)$ the $\sqcup$ connective with the semantics given above. Then every sentence of $\FO(\DD, \sqcup)$ is equivalent to some first order sentence. 
\label{propo:sqcup}
\end{proposition}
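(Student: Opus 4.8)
The plan is to show that the $\sqcup$ connective can always be pushed to the top of a formula of $\FO(\DD, \sqcup)$: every sentence of $\FO(\DD, \sqcup)$ is equivalent to a finite ``$\sqcup$-disjunction'' $\chi_1 \sqcup \cdots \sqcup \chi_n$ of sentences $\chi_i \in \FO(\DD)$. Once this is established the proposition follows immediately, since $\DD$ is strongly first order --- so each $\chi_i$ is equivalent to some first order sentence $\chi_i^\star$ --- and, by the satisfaction rule for $\sqcup$, a sentence $\chi_1 \sqcup \cdots \sqcup \chi_n$ is true in $\M$ exactly when some $\chi_i$ is, i.e.\ exactly when the first order sentence $\chi_1^\star \vee \cdots \vee \chi_n^\star$ is true in $\M$.

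The main ingredient is a set of distributivity equivalences stating that $\sqcup$ commutes outward through each of the operators of $\FO(\DD)$. Concretely, for all $\phi_1, \phi_2, \psi \in \FO(\DD, \sqcup)$ and all variables $v$ one has
\[
(\phi_1 \sqcup \phi_2) \wedge \psi \;\equiv\; (\phi_1 \wedge \psi) \sqcup (\phi_2 \wedge \psi), \qquad (\phi_1 \sqcup \phi_2) \vee \psi \;\equiv\; (\phi_1 \vee \psi) \sqcup (\phi_2 \vee \psi),
\]
\[
\exists v\,(\phi_1 \sqcup \phi_2) \;\equiv\; (\exists v\,\phi_1) \sqcup (\exists v\,\phi_2), \qquad \forall v\,(\phi_1 \sqcup \phi_2) \;\equiv\; (\forall v\,\phi_1) \sqcup (\forall v\,\phi_2),
\]
together with the symmetric forms of the first two equivalences and the trivial associativity and commutativity of $\sqcup$. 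Each of these is a direct unfolding of the semantic clauses of Definition \ref{def:teamsemantics}: for $\wedge$ it is just the distributivity of meta-level conjunction over meta-level disjunction; for $\exists$ it is the commutation of the existential quantifier over choice functions $H$ with the disjunction hidden in $\sqcup$; for $\vee$ it is the commutation of the existential quantifier over splittings $X = Y_1 \cup Y_2$ with that disjunction; and for $\forall$ it is immediate, since the clause for $\forall$ involves no existential choice. Note that dependency atoms $\D\tuple y$ contain no $\sqcup$, so they need no separate treatment; and that nothing here uses the empty team property or the constancy-atom encoding of $\sqcup$, which is exactly why the two obstructions mentioned before the statement are harmless.

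With these equivalences in hand, I would prove by structural induction on $\phi \in \FO(\DD, \sqcup)$ that $\phi$ is equivalent to some $\chi_1 \sqcup \cdots \sqcup \chi_n$ with each $\chi_i$ $\sqcup$-free (hence in $\FO(\DD)$). Literals and dependency atoms are already $\sqcup$-free; for $\psi_1 \sqcup \psi_2$ one concatenates the two lists supplied by the induction hypothesis; and for $\psi_1 \wedge \psi_2$, $\psi_1 \vee \psi_2$, $\exists v\,\psi$ and $\forall v\,\psi$ one first rewrites the immediate subformulas into this shape using the induction hypothesis and then applies the distributivity equivalences (iterated into their evident $n$-ary forms) to move every $\sqcup$ to the front. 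The $\sqcup$-free components that appear are of the form $\chi^1_i \wedge \chi^2_k$, $\chi^1_i \vee \chi^2_k$, $\exists v\,\chi_i$ or $\forall v\,\chi_i$ with $\sqcup$-free $\chi$'s, so they all lie in $\FO(\DD)$, which completes the induction and hence the proof.

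There is no serious obstacle here; the only point requiring a little care is the verification of the distributivity equivalences themselves, and among those the case of $\vee$ is the least transparent, since one must check that an existential quantification over subteam splittings $X = Y_1 \cup Y_2$ genuinely commutes with the meta-level disjunction coming from $\sqcup$ --- but this is routine once the clauses are written out. Incidentally, the inductive argument above never uses any property of $\DD$, so it shows that for \emph{every} family $\DD$ of dependencies each $\FO(\DD,\sqcup)$-sentence is equivalent to a $\sqcup$-combination of $\FO(\DD)$-sentences; strong first-orderness of $\DD$ enters only at the very end, to collapse such a combination into a single first order sentence.
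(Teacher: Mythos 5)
Your proposal is correct and follows essentially the same route as the paper's own proof: the paper likewise lists the four distributivity equivalences for $\vee$, $\wedge$, $\exists$ and $\forall$, applies them iteratively to pull every $\sqcup$ to the top, and then uses strong first-orderness to replace each $\sqcup$-free disjunct by a first order sentence and the top-level $\sqcup$ by an ordinary $\vee$. The extra detail you supply (the explicit structural induction and the remark that no property of $\DD$ is used until the final step) is a faithful elaboration of the same argument.
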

\begin{proof}
Let $\phi$ be any sentence of $\FO(\DD, \sqcup)$. Then apply iteratively the following, easily verified transformations 
\begin{itemize}
    \item $(\phi \sqcup \psi) \vee \theta \equiv \theta \vee (\phi \sqcup \psi) \equiv (\phi \vee \theta) \sqcup (\psi \vee \theta)$; 
    \item $(\phi \sqcup \psi) \wedge \theta \equiv \theta \wedge (\phi \sqcup \psi) \equiv (\phi \wedge \theta) \sqcup (\psi \wedge \theta)$; 
    \item $\exists v(\phi \sqcup \psi) \equiv (\exists v \phi) \sqcup (\exists v \psi)$;
    \item $\forall v(\phi \sqcup \psi) \equiv (\forall v \phi) \sqcup (\forall v \psi)$
\end{itemize}
until we obtain an expression $\phi'$, equivalent to $\phi$, of the form $\sqcup_i \psi_i$, where each $\psi_i$ is a sentence of $\FO(\DD)$.  But since $\DD$ is strongly first order, every such $\psi_i$ is equivalent to some first order sentence $\theta_i$; and, therefore, $\phi$ itself is equivalent to the first order sentence $\bigvee_i \theta_i$.
\end{proof}
Thus, whenever we have a family of strongly first order dependencies $\DD$ we can freely add the global disjunction connective $\sqcup$ to our language without increasing its expressive power. This is a deceptively simple result: in particular, it is not immediately obvious whether $\sqcup$ is similarly ``safe'' for families of dependencies $\DD$ that are \emph{not} strongly first order. In fact, this is not the case! To see why, let us first prove the following easy lemma: 

\begin{lemma}
Let $\phi \in \FO(\DD)$ be a formula in which the dependency atom $\D \in \DD$ occurs at least once, and suppose that $\M \models_X \phi$. Then there exists some $R$ such that $(M, R) \in \D$. Moreover, if $\M \models_\emptyset \phi$ then $(M, \emptyset) \in \D$ for all dependencies $\D$ appearing in $\phi$.
\label{lemma:Xtransmit}
\end{lemma}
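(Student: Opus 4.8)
The plan is to establish both statements by structural induction on $\phi$, exploiting the fact that in every rule of Team Semantics the subformulas of $\phi$ are evaluated on teams obtained from the current team by splitting (\textbf{TS-}$\vee$), duplication (\textbf{TS-}$\forall$), or supplementation (\textbf{TS-}$\exists$), together with the trivial observation that each atom $\D \tuple y$ occurring in $\phi$ is, at some point in the derivation of $\M \models_X \phi$, evaluated on some team $Y$ and thereby witnesses $(M, Y(\tuple y)) \in \D$.

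For the first claim, I would induct on $\phi$, carrying the statement ``if $\M \models_X \phi$ and $\D$ occurs in $\phi$, then there is an $R$ with $(M,R) \in \D$'' for \emph{all} teams $X$. The only base case in which $\D$ occurs is $\phi = \D \tuple y$; there $\M \models_X \D \tuple y$ means exactly $(M, X(\tuple y)) \in \D$, so $R = X(\tuple y)$ works. (First-order literals and atoms $\D' \tuple y$ with $\D' \neq \D$ contain no occurrence of $\D$, so there is nothing to prove.) For the inductive cases, note that $\D$ must occur in at least one immediate subformula, and that satisfaction of $\phi$ by $X$ always entails satisfaction of that subformula by some team: for $\psi_1 \wedge \psi_2$ both conjuncts are satisfied by $X$; for $\psi_1 \vee \psi_2$ some $Y_i \subseteq X$ satisfies the disjunct containing $\D$; for $\exists v \psi$ and $\forall v \psi$ the formula $\psi$ is satisfied by $X[H/v]$ resp.\ $X[M/v]$. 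In each case the induction hypothesis applied to that subteam yields the desired $R$.

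For the ``moreover'' part, the key point is that $\emptyset = \emptyset \cup \emptyset$, that $\emptyset[M/v] = \emptyset$, and that $\emptyset[H/v] = \emptyset$ for every choice of $H$ (the condition that $H$ take nonempty values is vacuous when the domain team is empty). Hence, by a straightforward induction on $\phi$, if $\M \models_\emptyset \phi$ then every subformula occurrence of $\phi$ — in particular every atom $\D \tuple y$ appearing in $\phi$ — is evaluated on the empty team. Applying \textbf{TS-}$\D$ to such an occurrence gives $(M, \emptyset(\tuple y)) = (M, \emptyset) \in \D$, which is exactly the claim.

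I do not anticipate a genuine obstacle here: the argument is a routine structural induction. The only mild care required is (i) in the disjunction case of the first claim, observing that even if the relevant subteam $Y_i$ happens to be empty the argument still produces a valid witness $R$ (namely $R = \emptyset$, which then lies in $\D$), so no case distinction on emptiness is needed; and (ii) in the ``moreover'' part, explicitly noting that supplementation of the empty team is again empty regardless of $H$, which is what forces all atoms to be tested on $\emptyset$.
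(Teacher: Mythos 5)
Your proof is correct and follows essentially the same route as the paper: a structural induction on $\phi$, using that each connective and quantifier passes satisfaction down to the immediate subformulas on some (possibly modified) team, and that splitting, duplication, and supplementation all send the empty team to the empty team for the ``moreover'' part. No gaps; the two points of care you flag (the empty subteam in the disjunction case, and $\emptyset[H/v]=\emptyset$) are exactly the ones the paper's own induction relies on.
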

\begin{proof}
The proof is a straightforward structural induction, but we report it in full for clarity's sake (note, as an aside, that if we added the global disjunction $\sqcup$ to our language the induction would not carry through). 
\begin{itemize}
	\item If $\phi$ is a literal, by assumption it must be of the form $\D \tuple v$ for some $\tuple v$. Then if $\M \models_X \phi$ then $(M, X(\tuple v)) \in \D$, as required; and moreover if $\M \models_X \phi$ for $X = \emptyset$ then $X(v) = \emptyset$ and hence $(M, \emptyset) \in \D$. 
\item If $\phi$ is of the form $\psi_1 \vee \psi_2$, some atom of the form $\D \tuple v$ must appear in $\psi_1$ or in $\psi_2$. Without loss of generality, let us suppose that it appears in $\psi_1$. Then, if $\M \models_X \phi$, we have that $X = Y \cup Z$ for two $Y$, $Z$ such that $\M \models_Y \psi_1$ and $\M \models_Z \psi_2$. But then, by induction hypothesis, there exists some $R$ such that $(M, R) \in \D$. Moreover, if $\M \models_\emptyset \phi$ and $\D$ is a dependency appearing in $\phi$ then necessarily $\M \models_\emptyset \psi_1$ and $\M \models_\emptyset \psi_2$, from which by induction hypothesis (since $\D$ appears in $\psi_1$ or in $\psi_2$) we have that $(M, \emptyset) \in \D$. 
\item If $\phi$ is of the form $\psi_1 \wedge \psi_2$ --- again, assuming without loss of generality that $\D$ appears in $\psi_1$ -- and $\M \models_X \phi$ then $\M \models_X \psi_1$ and $\M \models_X \psi_2$, and hence by induction hypothesis $(M, R) \in \D$. Moreover, if $\M \models_\emptyset \phi$ then $\M \models_\emptyset \psi_1$ and $\M \models_\emptyset \psi_2$, and so by induction hypothesis $(M, \emptyset) \in \D$ for all dependencies $\D$ appearing in $\phi$. 
\item If $\phi$ is of the form $\exists v \psi$ and $\M \models_X \phi$ then for some $Y = X[H/v]$ it is true that $\M \models_Y \psi$. Hence, by induction hypothesis, $(M, R) \in \D$ for some $R$. Moreover, if $\M \models_\emptyset \phi$ then $\M \models_\emptyset \psi$ (since $\emptyset[H/v] = \emptyset$ for all $H$) and hence by induction hypothesis $(M, \emptyset) \in \D$ for all dependencies $\D$ appearing in $\phi$. 
\item If $\phi$ is of the form $\forall v \psi$ and $\M \models_X \phi$ then for $Y = X[M/v]$ it is true that $\M \models_Y \psi$. Again, by induction hypothesis, this implies that $(M, R) \in \D$ for some $R$. Moreover, $\emptyset[M/v] = \emptyset$, and hence if $\M \models_\emptyset \phi$ and $\D$ appears in $\phi$ then $\M \models_\emptyset \psi$ and $(M, \emptyset) \in \D$ by induction hypothesis. 
\end{itemize}
\end{proof}

Now consider the two (first order, but not strongly first order) dependencies 
\begin{description}
    \item[TS-LO2:] $\M \models_X \textbf{LO2}(x,y,z)$ if and only if $X(xy)$ describes a total linear order with endpoints over all elements of $M$ and $X(z)$ does not contain the first element of this order, contains the second and the last, and whenever it contains an element it does not contain its successor (in the linear order) but it contains its successor's successor.
    \item[TS-LO3:] $\M \models_X \textbf{LO3}(x,y,z)$ if and only if $X(xy)$ describes a total linear order with endpoints over all elements of $M$ and $X(z)$ does not contain the first or the second elements of this order, contains the third and the last, and whenever it contains an element it does not contain its successor (in the linear order) nor its successor's successor but it contains its successor's successor's successor. 
\end{description}
Then the $\FO(\textbf{LO2}, \textbf{LO3}, \sqcup)$ sentence 
$(\exists x y z \textbf{LO2}(x,y,z)) \sqcup (\exists x y z \textbf{LO3}(x,y,z))$
is easily seen to hold in a model $\M$ if and only if the size $|M|$ of its domain is a multiple of two or of three (or it is infinite). 
However, there is no sentence  $\phi$ of $\FO(\textbf{LO2}, \textbf{LO3})$ that is true if and only if this property holds. Indeed, suppose that such a $\phi$ existed. Then the $\textbf{LO2}$ dependency cannot appear in it: indeed, for a model $\M_3$ with precisely three elements it must hold that $\M_3 \models_{\{\epsilon\}} \phi$, and if $\textbf{LO2}$ appeared in it then by Lemma \ref{lemma:Xtransmit} there should be some $R$ such that $(M_3, R) \in \textbf{LO2}$. But by definition, $(M_3, R) \not \in \textbf{LO2}$ for any $R$ (not even for $R = \emptyset$!), and so this cannot be the case. Similarly, $\textbf{LO3}$ cannot occur in $\phi$, because $\phi$ must be true in a model $\M_2$ with exactly two elements. 

Therefore $\phi$ must be first order; and a standard back-and-forth argument shows that there is no first order sentence over the empty signature that is true in a model if and only if its size is divisible by two or by three (or it is infinite).\\

Another consequence of Lemma \ref{lemma:Xtransmit} is that Conjecture \ref{conj:1} is false. Indeed, consider the dependency $\mathbf U$ such that 
	$\M \models_X \textbf{U} v$ if and only if $X(v) = \emptyset \text{ or } X(v) = M$. Then $\mathbf U v$ is satisfied by a team $X$ in a model $\M$ if and only if the variable $v$ takes no values at all (that is, $X$ itself is empty) or all possible values. This dependency is strongly first order by Proposition \ref{propo:sqcup}: indeed, it is easy to see that $\textbf U v \equiv \bot \sqcup \texttt{All}(v)$, where $\texttt{All}(v)$ is the first order and upwards closed (and, therefore, strongly first order) dependency such that $\M \models_X \texttt{All}(v)$ iff $X(v) = M$. However, $\mathbf U$ is not definable in Team Semantics (without global disjunction) in terms of constancy atoms and upwards closed first order dependencies. Indeed, suppose that $\mathbf U v$ is equivalent to $\phi(v)$ for some $\phi \in \FO(\DD^\uparrow, =(\cdot))$, where $\DD^\uparrow$ describes the family of all upwards closed first order dependencies. Then since for all $\M$ we have that $\M \models_\emptyset \mathbf U v$, by Lemma \ref{lemma:Xtransmit} $\M \models_\emptyset \D \tuple t$ for every occurrence $\D \tuple t$ of a dependence atom $\D$ in $\phi$. But if $\D$ is upwards closed and $\M \models_\emptyset \D \tuple t$ then $\M \models_Y \D \tuple t$ for all $Y$, and this is the case for all models $\M$. So, for any $\D \in \DD^\uparrow$, we can replace every occurrence $\D \tuple t$ of $\D$ in $\phi$ with $\top$ without affecting its satisfaction conditions. Therefore there is some $\psi(v) \in \FO(=\!\!(\cdot))$ which defines $\mathbf U v$, which is impossible since by Proposition \ref{propo:closure} all formulas in $\FO(=\!\!(\cdot))$ are downwards closed but $\mathbf U v$ is not. However, we can recover our conjecture by modifying it as follows: 

\begin{conjecture}
Every strongly first order dependency $\D(R)$ is definable in terms of upwards closed dependencies, constancy atoms and global disjunctions. 
\label{conj:1bis}
\end{conjecture}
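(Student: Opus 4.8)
The plan is to prove the full conjecture by a reduction to the relativizable, non-jumping case, and then to remove those two hypotheses. Concretely, given a strongly first order dependency $\D$ of arity $k$, the target is a formula $\phi_\D(\tuple v) \in \FO(\DD^\uparrow, =\!\!(\cdot), \sqcup)$ with $\M \models_X \D \tuple v \Leftrightarrow \M \models_X \phi_\D(\tuple v)$ for every model $\M$ and every team $X$. Since a strongly first order dependency is in particular first order (Definition \ref{def:FOdep}), I would fix a defining sentence $\D(R)$ of quantifier rank $m$; then membership $(M,R) \in \D$ depends only on the complete first order $m$-type of $(M,R)$ in the signature $\{R\}$, and there are only finitely many such types. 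Hence $\D$ is a finite union of $m$-type classes, and it suffices to define each \emph{accepted} class team-semantically and to combine the results with $\sqcup$. Proposition \ref{propo:sqcup} guarantees that this use of $\sqcup$ keeps us inside a strongly first order language, so the content lies entirely in defining the atom as a formula.

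First I would isolate the available team-semantic primitives and check that they suffice to pin down a prescribed relation shape. A first order literal forces $X(\tuple v)$ to be \emph{contained} in any definable relation, since $\M \models_X \beta(\tuple v)$ holds iff every $s \in X$ satisfies $\beta$; dually, an upwards closed dependency from $\DD^\uparrow$ forces $X(\tuple v)$ to \emph{contain} a definable relation; and constancy atoms name finitely many fixed elements as witnesses. Combining a lower bound from $\DD^\uparrow$ with an upper bound from a literal pins $X(\tuple v)$ to an exactly prescribed definable shape, while $\sqcup$ permits an arbitrary Boolean combination of such global conditions (the identity $\mathbf U v \equiv \bot \sqcup \texttt{All}(v)$ above is the simplest instance). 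The goal of this step is to show that each accepted $m$-type $\tau$ admits such a description $\chi_\tau(\tuple v)$, after which one sets $\phi_\D := \bigsqcup_{\tau \in T_{\mathbf D}} \chi_\tau$, where $T_{\mathbf D}$ is the finite set of accepted types.

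Rather than construct every $\chi_\tau$ by brute force, the cleaner route is to invoke the characterization this paper establishes for the relativizable, non-jumping case (Definitions \ref{def:relativ} and \ref{def:nonjump}): for such dependencies the accepted $m$-types are controlled by their behaviour on the active domain and are insensitive to padding the model with inert elements, which is exactly what makes the $\chi_\tau$ constructible from the three primitives above. The full conjecture then reduces to the purely structural claim that \emph{every} strongly first order dependency is both relativizable and non-jumping. To prove non-jumping I would argue contrapositively, in the spirit of the $\textbf{LO2}/\textbf{LO3}$ example above: a genuine jump — membership changing as inert elements are added or removed — can be exploited inside an $\FO(\D)$ sentence to detect a cardinality or divisibility property of $|M|$, which a back-and-forth argument shows is not first order, contradicting strong first-orderness. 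For relativizability I would attempt the analogous construction, using a failure of relativization to a first order definable subdomain to separate two models that are otherwise elementarily equivalent up to the relevant quantifier rank.

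The main obstacle is precisely this reduction step. Non-jumping appears within reach via the cardinality-detection argument, but relativizability is far more delicate: a failure of relativization need not translate into a single $\FO(\D)$ sentence that is provably non-first-order, because the offending behaviour may only surface on subdomains that the team cannot isolate with the remaining connectives, and a dependency could in principle compensate for non-relativizable behaviour elsewhere while still keeping every $\FO(\D)$ sentence first order. Should a strongly first order dependency that is not relativizable exist, the direct construction of $\chi_\tau$ could no longer rest on active-domain locality, and bounding the relevant $m$-types would demand a genuinely new idea. Deciding whether such a dependency exists — equivalently, whether strong first-orderness forces relativizability — is the heart of the remaining open problem, and is exactly why the present work secures the conjecture only under the two stated hypotheses.
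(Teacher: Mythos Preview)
The statement you address is a \emph{conjecture}; the paper does not prove it in full. What the paper actually proves is the special case in which $\D$ is relativizable and non-jumping (the final Corollary), obtained via Theorem~\ref{thm:psitheta}: using $\omega$-big models and a Lyndon-style positivity argument, $\D(R)$ is shown to be equivalent to a sentence of the shape $\exists \tuple z(\psi^+(R,\tuple z)\wedge\forall\tuple x(R\tuple x\rightarrow\theta(\tuple x,\tuple z)))$, from which the definition in $\FO(\DD^\uparrow,=\!\!(\cdot),\sqcup)$ is read off directly. So there is no paper proof of the full statement to compare against; your proposal is an attempt to close the gap the paper explicitly leaves open via Conjectures~\ref{conj:2} and~\ref{conj:3}.

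Your reduction strategy contains a concrete error. You propose to show that every strongly first order dependency is non-jumping and then invoke the paper's corollary. But this is false, and the paper gives the counterexample: $\D=\{(M,P):|P|\neq 1\}$ is strongly first order (it is $\bot\sqcup{\not=}(v)$) yet jumping, since $(M,\emptyset)\in\D$ while any path from $\emptyset$ to a maximal $R'$ passes through a singleton, which fails $\D$. This is precisely why the paper states the weaker Conjecture~\ref{conj:3}, that every strongly first order $\D$ is a \emph{disjunction} of non-jumping strongly first order pieces, rather than claiming $\D$ itself is non-jumping. Moreover, your description of a ``jump'' as ``membership changing as inert elements are added or removed'' is not what Definition~\ref{def:nonjump} says: non-jumping concerns enlarging the relation $R$ within a fixed $M$, not changing $M$. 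Your proposed cardinality-detection argument therefore targets the wrong property.

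Your $m$-type decomposition also does not do the work you claim. Splitting $\D$ into finitely many complete $m$-type classes is fine, but nothing guarantees that each accepted class is itself strongly first order (let alone relativizable and non-jumping), which is what you would need in order to feed it to the paper's corollary. The heart of the paper's argument is not a type decomposition but the structural form of $\D(R)$ obtained in Theorem~\ref{thm:psitheta}; your sketch neither recovers that form nor offers a substitute for it.
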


In conclusion, even though we may add the $\sqcup$ operator ``for free'' as long as we are only working with strongly first order dependencies, this is not necessarily the case if we are working with more expressive types of dependencies; and even though Conjecture \ref{conj:1} is false, it may still be true if this operator is added to the basic language of Team Semantics. This suggests that the global (or Boolean) disjunction may have a more central role in the study of Team Semantics than the one it had so far. In the next section, we will see a proof of a special case of Conjecture \ref{conj:1bis}. 
\section{Non-Jumping, Relativizable Dependencies }
In this section we will prove a restricted version of Conjecture \ref{conj:1bis} under two additional (and commonly true) conditions. The first condition that we will assume will be that the dependencies we are discussing are \emph{relativizable} in the sense of the following definition:\footnote{This definition is related, but not identical, to the notion of relativization of formulas in Team Semantics discussed by R\"onnholm in \S 3.3.1 of \cite{ronnholm2018arity}.}
\begin{definition}[Relativized Dependencies, Relativizable dependencies --- \cite{galliani19characterizing}, Definition 2.31]
Let $\DD$ be a family of dependencies and let $P$ be a unary predicate. Then the language $\FO(\DD^{(P)})$ adds to First Order Logic the \emph{relativized dependence atoms} $\D^{(P)} \tuple y$ for all $\D \in \DD$, and the corresponding semantics (for models whose signature contains $P$) is given by 
\[
\M \models_X \D^{(P)} \tuple y \text{ iff } (P^{\M}, X(\tuple y)) \in \D
\]
where $P^{\M}$ is the interpretation of $P$ in $\M$.\footnote{In this work, when no ambiguity is possible we will generally write the relation symbols $P$, $R$, $S$ instead of the corresponding interpretations $P^\M$, $R^\M$, $S^\M$.}

A dependency $\D$, or a family of dependencies $\DD$, is said to be \emph{relativizable} if any sentence of $\FO(\D^{(P)})$ (resp. $\FO(\DD^{(P)})$) is equivalent to some sentence of $\FO(\D)$ (resp. $\FO(\DD)$). 
\label{def:relativ}
\end{definition}

It follows easily from the above definition that if $\M \models_X \D^{(P)} \tuple y$ then $X(y_i) \subseteq P^{\M}$ for all $y_i \in \tuple y$: otherwise, it will always be the case that $(P^{\M}, X(\tuple y)) \not \in \D$. 
 
Essentially all the dependencies studied in the context of Team Semantics thus far are relativizable. Most of them have even the stronger property of being \emph{universe independent} in the sense of \cite{kontinen2014decidable}: in brief, whether $\M \models_X \D \tuple y$ or not depends only on the value of $X(\tuple y)$ (and not on the domain $M$ of $\M$), from which relativizability follows trivially.

As was pointed out to the author by Fausto Barbero in a personal communication, a counting argument shows that there exist generalized dependencies that are not relativizable. A concrete example is the unary  dependency $I_\infty = \{(M, P) : M \text{ is infinite }\}$. Of course this is not a first order dependency, and it is a unusual dependency in that whether $\M \models_X I_\infty v$ or not does not depend on $X(v)$ but only on $M$; but it is nonetheless a perfectly legitimate generalized dependency, and it is not relativizable. Indeed, the class of models $ \mathcal C = \{(M, P): P \text{ is infinite}\}$ is defined by the $\FO(I_\infty^{(P)})$ sentence $\exists v I_\infty^{(P)} v$; however,  the same class of models is not defined by any $\FO(I_\infty)$ sentence, because in any infinite model any occurrence of $I_\infty$ can be replaced by the trivially true literal $\top$ and the class $\mathcal C$ is not first order definable.
The author does not however know of any \emph{strongly first order} generalized dependency that is not relativizable. The following conjecture is, therefore, open and --- if true --- would allow us to remove the relativizability requirement: 
\begin{conjecture}
Every strongly first order generalized dependency is relativizable. 
\label{conj:2}
\end{conjecture}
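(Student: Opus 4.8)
The plan is to begin by reducing the statement to a pure collapse result. Observe that, when $\D$ is strongly first order, $\D$ is relativizable if and only if every sentence of $\FO(\D^{(P)})$ is equivalent to some \emph{first order} sentence: by definition relativizability asks that every sentence of $\FO(\D^{(P)})$ be equivalent to a sentence of $\FO(\D)$, and since $\D$ is strongly first order these are, up to equivalence, exactly the first order sentences (every first order sentence being trivially a sentence of $\FO(\D)$). So the whole task reduces to showing that adjoining to $\FO$ the relativized atoms $\D^{(P)}$ --- which, unlike $\D$ itself, can only ``see'' the elements of the model lying inside $P$ --- never takes us out of $\FO$.

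To prove this I would try to relativize a given sentence $\phi \in \FO(\D^{(P)})$ to $P$, rewriting it into an equivalent formula all of whose quantifiers, literals and atoms refer only to elements of $P^{\M}$. The first order scaffolding relativizes in the familiar way, subject to the known subtlety --- discussed by R\"onnholm in \S 3.3.1 of \cite{ronnholm2018arity} --- that in Team Semantics a universal quantifier cannot just be replaced by $\forall v\,(\lnot P v \vee \chi)$ unless $\chi$ is downward closed, so that one must use an auxiliary device (a dummy element, or splitting the out-of-$P$ assignments off into a trivially satisfied subformula) to simulate the team $X[P^{\M}/v]$ faithfully. Constancy atoms relativize trivially ($=\!\!(v)^{(P)}$ is just $=\!\!(v) \wedge P v$) and the global disjunction, if present, commutes with relativization ($(\psi_1 \sqcup \psi_2)^{(P)} \equiv \psi_1^{(P)} \sqcup \psi_2^{(P)}$; recall also that by Proposition \ref{propo:sqcup} it is in any case harmless over strongly first order families). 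All the difficulty then concentrates in a single point: the atom $\D^{(P)} \tuple y$ asserts $(P^{\M}, X(\tuple y)) \in \D$, and we must re-express this, over the signature augmented with $P$, using the \emph{unrelativized} atom $\D$ --- in effect, we must simulate ``membership of $X(\tuple y)$ in $\D$ relative to the universe $P^{\M}$'' with an atom whose built-in semantics always measures membership relative to the whole domain $M$.

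I expect this last point to be the main obstacle, and it is the reason the statement remains conjectural. A generalized dependency need not be universe independent, so in general $(P^{\M}, X(\tuple y)) \in \D$ is genuinely not determined by $(M, X(\tuple y)) \in \D$, and Team Semantics offers no uniform ``shrink the universe'' operation. Pushing the argument through therefore seems to require a structural description of arbitrary strongly first order dependencies --- precisely the normal form posited by the revised Conjecture \ref{conj:1bis} (definability from upwards closed dependencies, constancy atoms and global disjunctions). Given that normal form one could hope to relativize it building block by building block: the relativization of upwards closed first order dependencies is handled by the results of \cite{galliani2015upwards}, constancy and $\sqcup$ are trivial as above, and what remains is exactly the relativization theorem for that fragment --- i.e. that evaluating one of its formulas in the submodel on $P^{\M}$ coincides with evaluating its $P$-relativization in $\M$ --- which is the delicate step because the fragment is not downward closed. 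Lacking such a classification, the natural partial goal would be to establish Conjecture \ref{conj:2} for \emph{non-jumping} dependencies while dropping the relativizability hypothesis, so that the combinatorial machinery developed in the rest of the paper becomes applicable; even there, controlling the universal quantifier under relativization appears to be the crux.
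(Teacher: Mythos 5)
There is nothing to compare your attempt against: the statement you were given is Conjecture~\ref{conj:2}, which the paper explicitly leaves open (``The following conjecture is, therefore, open and --- if true --- would allow us to remove the relativizability requirement''). The paper offers no proof, only the observation that non-relativizable generalized dependencies exist (the example $I_\infty$) while no \emph{strongly first order} non-relativizable one is known. Your proposal, to its credit, does not pretend to close the argument either; it is an analysis of the obstruction rather than a proof, and as such it contains a genuine gap by construction: everything is deferred to an unproven normal form (Conjecture~\ref{conj:1bis}) or to a hypothetical ``shrink the universe'' simulation of $\D^{(P)}$ by $\D$, neither of which is supplied.

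That said, the parts of your analysis that are checkable are correct and worth recording. The opening reduction is sound: for strongly first order $\D$, the sentences of $\FO(\D)$ over a given signature are, up to equivalence, exactly the first order sentences, so relativizability of $\D$ is equivalent to the collapse of $\FO(\D^{(P)})$ to $\FO$. Your treatment of the easy building blocks is also right --- $=\!\!(v)^{(P)}$ is $=\!\!(v)\wedge Pv$ given the paper's convention that $\D^{(P)}\tuple y$ fails unless $X(\tuple y)\subseteq (P^{\M})^k$, $\sqcup$ commutes with relativization, and the universal quantifier is the delicate connective outside the downwards closed fragment, exactly as in R\"onnholm's discussion. And you correctly locate the crux: $\D^{(P)}\tuple y$ asserts $(P^{\M}, X(\tuple y))\in\D$, which for a dependency that is not universe independent is not a function of $(M, X(\tuple y))$, so no local rewriting of the atom can work and some structural classification of strongly first order dependencies seems unavoidable. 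One small caution: your suggested fallback of proving Conjecture~\ref{conj:2} ``for non-jumping dependencies while dropping the relativizability hypothesis'' would be circular as a route to the paper's main theorem, since the machinery of Theorem~\ref{thm:emaxdef} and Lemma~\ref{lemma:bigpsitheta} already presupposes relativizability; you would need to re-derive those results without it, which is precisely what is not known how to do.
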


In order to describe the second condition we need the following definition: 

\begin{definition}[\cite{galliani19characterizing}, Proposition 4.2]
Let $\D$ be any generalized dependency. Then $\Dmax$ is the dependency $\{(M, R): (M, R) \in \D \text{ and } \forall S \supsetneq R, (M, S) \not \in \D\}$.
\end{definition}

In general, for $\D$ first order there is no guarantee that whenever $(M, R) \in \D$ there is some $S \supseteq R$ at all such that $(M, S) \in \Dmax$; but, as we will see soon, if $\D$ is strongly first order this is indeed the case, and moreover $\Dmax$ itself is also strongly first order.

\begin{definition}
	A dependency $\D$ is \emph{non-jumping} if, for all sets of elements $M$ and all relations $R$ over $M$ of the same arity as $\D$, if $(M, R) \in \D$ then there exists some $R' \supseteq R$ such that 
	\begin{enumerate}
		\item $(M, R') \in \Dmax$; 
		\item For all relations $S$, if $R \subseteq S \subseteq R'$ then $(M, S) \in \D$. 
	\end{enumerate}
	\label{def:nonjump}
\end{definition}
In other words, a dependency $\D$ is non-jumping if whenever it holds of some $R$ we can ``enlarge'' $R$ to some $R'$ that is maximal among those that satisfy $\D$ and such that, furthermore, any relation $S$ between $R$ and $R'$ satisfies also $\D$. It is possible to find examples of jumping dependencies, like for instance $\D = \{(M, P) : |P| \not = 1\}$;\footnote{For this dependency, we have that $\M \models_X \D v$ if and only if $X = \emptyset$ or $|X(v)| \geq 2$. In other words, $\D v$ is equivalent to $\bot ~\sqcup~ \not =(v)$, where $\not =(v)$ is the (upwards closed) \emph{non-constancy} atom which is true in a team iff $|X(v)| \geq 2$. } but non-jumping dependencies nonetheless constitute a natural and general category of dependencies.

We now need to generalize the two following results from \cite{galliani19characterizing} to the case of dependencies that are not necessarily downwards closed: 
\begin{proposition}[\cite{galliani19characterizing}, Proposition 4.2 and Corollary 4.3] 
Let $\D$ be a downwards closed strongly first order dependency. Then $\Dmax$ is also strongly first order, and whenever $(M, R) \in \D$ there is some $R' \supseteq R$ such that $(M, R') \in \Dmax$.
\label{propo:dmax}
\end{proposition}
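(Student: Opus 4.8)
The plan is to prove the two assertions in turn, with the existence of maximal extensions (in a slightly strengthened, uniform form) as the substantive step and the strong first-orderness of $\Dmax$ following from it.

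Since $\D$ is strongly first order it is in particular first order, so fix a first order sentence $\delta(R)$ with $(M,R)\in\D \Leftrightarrow (M,R)\models\delta(R)$; but recall that first-orderness of $\D$ alone is strictly weaker than strong first-orderness, and it is the full collapse $\FO(\D)\equiv\FO$ --- and hence compactness and L\"owenheim--Skolem for $\FO(\D)$ --- that does the work. The finite case of the extension claim is immediate, so assume $M$ infinite. The target of this step is the stronger statement that is the substance of Proposition 4.2 and Corollary 4.3 of \cite{galliani19characterizing}: the maximal members of $\D$ are uniformly first order definable from boundedly many element parameters, i.e. there are finitely many first order formulas $\theta_1,\dots,\theta_n$ (each with parameter variables $\tuple z$ and a free tuple $\tuple w$ of the arity of $\D$) such that $(M,R)\in\Dmax$ iff $R=\{\tuple w : M\models\theta_j(\tuple b,\tuple w)\}$ for some $j$ and some parameter tuple $\tuple b$ over $M$, and moreover every member of $\D$ is contained in such a maximal member. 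I would argue this by contradiction: if some $(M,R_0)\in\D$ had no maximal extension, then (exploiting the saturation/compactness afforded by $\FO(\D)\equiv\FO$) one can manufacture, over a sufficiently rich structure, an ``infinite uncappable climb'' through $\D$ starting at $R_0$, and then use an Ehrenfeucht--Fra\"iss\'e analysis of the fixed-rank sentence $\delta$ to contradict the first-orderness of $\D$; a similar analysis bounds the complexity of maximal members and yields the finitely many $\theta_j$. I expect this to be the main obstacle. Two temptations are to be resisted: it does not suffice to note that $\D$ is closed under unions of chains, since it need not be --- for instance the unary dependency $R\neq M$ is strongly first order (it is equivalent to $\exists w(=\!\!(w)\wedge w\neq v)$, hence expressible with constancy atoms alone) yet admits chains whose union leaves $\D$ --- so a bare Zorn's lemma argument fails; and a maximal relation obtained over an elementary extension $\M^*\succeq\M$ cannot simply be restricted back down to $M$, since downward closure of $\D$ governs the shrinking of the relation, not of the universe.

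Granting that structural description, strong first-orderness of $\Dmax$ follows. First, $(M,R)\in\D \Leftrightarrow \exists\,(M,R')\in\Dmax$ with $R\subseteq R'$, where $\Leftarrow$ uses downward closure. Next, a $\Dmax$-atom can be expressed, over the empty signature, in the language $\FO(\DD^\uparrow,=\!\!(\cdot),\sqcup)$: $\Dmax\tuple y$ is equivalent to the global disjunction over $j=1,\dots,n$ of
\[
\exists\tuple z\Big(\bigwedge_i =\!\!(z_i)\ \wedge\ \theta_j(\tuple z,\tuple y)\ \wedge\ \exists\tuple u\big((\tuple u=\tuple y\ \vee\ \lnot\theta_j(\tuple z,\tuple u))\ \wedge\ \texttt{All}(\tuple u)\big)\ \wedge\ \mathrm{Max}_j(\tuple z)\Big),
\]
where the $=\!\!(z_i)$ are constancy atoms (so $\tuple z$ is the guessed parameter tuple), the first order formula $\theta_j(\tuple z,\tuple y)$ imposes $X(\tuple y)\subseteq\{\tuple w:\theta_j(\tuple z,\tuple w)\}$ (a first order formula holds of a team iff it holds of each assignment in it), $\texttt{All}(\tuple u)$ asserts that $X(\tuple u)$ is the full relation $M^{|\tuple y|}$ (an upwards closed first order atom, hence in $\DD^\uparrow$), and $\mathrm{Max}_j(\tuple z)$ is the first order statement that $\{\tuple w:\theta_j(\tuple z,\tuple w)\}$ is a maximal member of $\D$; the inner subformula forces the reverse inclusion, since the disjunct $\tuple u=\tuple y$ (read per assignment) bounds the $\tuple u$-values from one side of the split by $X(\tuple y)$, the disjunct $\lnot\theta_j(\tuple z,\tuple u)$ bounds those from the other side by the complement of $\{\tuple w:\theta_j(\tuple z,\tuple w)\}$, and $\texttt{All}(\tuple u)$ then forces $\{\tuple w:\theta_j(\tuple z,\tuple w)\}\subseteq X(\tuple y)$. (The empty team, for which the empty team property may fail for $\Dmax$, is treated as a separate disjunct, as is standard.) Substituting this definition for every $\Dmax$-atom in an arbitrary $\FO(\Dmax)$-sentence produces an $\FO(\DD^\uparrow,=\!\!(\cdot),\sqcup)$-sentence, which is equivalent to a first order sentence by Proposition \ref{propo:sqcup} together with the result of \cite{galliani2015upwards} that $\FO(\DD^\uparrow,=\!\!(\cdot))$ is strongly first order; hence every sentence of $\FO(\Dmax)$ is first order, i.e. $\Dmax$ is strongly first order.
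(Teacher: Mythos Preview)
The paper does not prove this proposition: it is quoted from \cite{galliani19characterizing} and only \emph{used} (via Proposition~\ref{propo:emax}) to obtain the non-downwards-closed generalization. So there is no ``paper's own proof'' to compare against here beyond the citation; what can be inferred is that the source argument for the companion structural result (Theorem~\ref{thm:dmaxdef}) goes through the Chang--Makkai theorem, as the footnote indicates.

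Your proposal has a real gap. You set out to prove the \emph{structural description} of $\Dmax$ --- that its members are uniformly parametrically definable by finitely many first order formulas $\theta_j$ --- and then derive both claims of the proposition from it. But that structural description is exactly the content of Theorem~\ref{thm:dmaxdef} (Theorem~4.5 in \cite{galliani19characterizing}), not of Proposition~4.2/Corollary~4.3, and it is stated there under the additional hypothesis that $\D$ is \emph{relativizable}. Your sketch for obtaining it (``manufacture an infinite uncappable climb\ldots\ Ehrenfeucht--Fra\"iss\'e analysis of the fixed-rank sentence $\delta$'') is not a proof: you yourself flag it as ``the main obstacle'', and it is not at all clear how an E--F argument would bound the number or shape of the $\theta_j$, nor how it would replace the role relativizability plays in the Chang--Makkai route. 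In other words, you have reduced the proposition to a harder statement whose proof you do not supply and which, on the evidence of the paper, may genuinely need an extra hypothesis you do not have.

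Your second half --- the $\FO(\DD^\uparrow,=\!\!(\cdot),\sqcup)$ definition of $\Dmax\tuple y$ once the $\theta_j$ are in hand, and the appeal to Proposition~\ref{propo:sqcup} --- is essentially correct and nicely done (your use of $(\tuple u=\tuple y\vee\lnot\theta_j)\wedge\texttt{All}(\tuple u)$ to force the reverse inclusion is the right trick, and $\mathrm{Max}_j$ is first order because downward closure lets you test maximality one tuple at a time). But all of that is conditional on a step you have not established. If you want a self-contained argument for the proposition as stated, you should look for a route to ``$\Dmax$ is strongly first order'' and ``maximal extensions exist'' that does not pass through the full parametric definability of $\Dmax$, since the paper separates these two results precisely along the relativizability hypothesis.
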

\begin{theorem}[\cite{galliani19characterizing}, Theorem 4.5]
	Let $\D$ be a downwards closed, strongly first order, relativizable dependency.\footnote{The empty team property is not required, because in the proof of Theorem 4.5 of \cite{galliani19characterizing} this property was necessary only to translate a global disjunction into $\FO(=\!\!(\cdot))$ and not to find the $\theta_i$ via the Chang-Makkai Theorem.} Then there are first order formulas $\theta_1(\tuple x, \tuple z)$ \ldots $\theta_n(\tuple x, \tuple z)$ over the empty signature such that, for all models $\M = (M, R)$, 
	\[
		\M \models \Dmax(R) \Rightarrow \M \models \bigvee_{i=1}^n (\exists \tuple z \forall \tuple x (R \tuple x \leftrightarrow \theta_i(\tuple x, \tuple z))).
	\]
	\label{thm:dmaxdef}
\end{theorem}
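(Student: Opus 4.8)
The plan is to reduce the statement to the Chang--Makkai theorem on finite piecewise definability with parameters, and then to verify that theorem's boundedness hypothesis from the three assumptions on $\D$. First I would note that, by Proposition \ref{propo:dmax}, the dependency $\Dmax$ is strongly first order, hence in particular first order; thus the class $\{\M = (M,R) : \M \models \Dmax(R)\}$ is elementary, axiomatised over the signature $\{R\}$ by the single first-order sentence $\Dmax(R)$. Since the ``base'' signature here --- everything apart from $R$ --- is empty, the conclusion we are after, namely that in every model of $\Dmax(R)$ the interpretation of $R$ is defined by one of finitely many empty-signature formulas $\theta_i(\tuple x,\tuple z)$ together with a choice of parameters for $\tuple z$, is exactly the conclusion supplied by the Chang--Makkai theorem for the theory $\{\Dmax(R)\}$. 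That theorem reduces the existence of the $\theta_i$ to a boundedness hypothesis: uniformly over all models, the interpretation of $R$ is determined up to finitely many possibilities by a uniformly bounded number of parameters (equivalently, that $\{R : (M,R)\models\Dmax(R)\}$ falls into at most $N$ orbits under permutations of $M$, for a bound $N$ depending only on $\D$). So everything comes down to establishing such an $N$.

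This is where downward closure and relativizability enter, and where I expect the real work to be. I would argue by contradiction. If no such $N$ existed, then for each $N$ there would be a domain carrying more than $N$ pairwise non-conjugate maximal $\D$-relations. Downward closure, together with the second half of Proposition \ref{propo:dmax} (every relation in $\D$ lies below one in $\Dmax$), makes it possible to speak within $\FO(\D^{(P)})$ about maximal $\D$-relations over a first-order definable part $P$ of a model, so that the relativized atoms genuinely detect this orbit structure; feeding it into a counting construction in the spirit of the $\textbf{LO2}$/$\textbf{LO3}$ example of Section 3, one builds an $\FO(\D^{(P)})$ sentence whose spectrum --- the set of cardinalities of its models --- is not first-order definable, for instance by forcing a divisibility constraint. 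But $\D$ is relativizable, so this sentence is equivalent to one of $\FO(\D)$; and $\D$ is strongly first order, so that sentence is in turn equivalent to a first-order one, whose spectrum \emph{is} first-order definable --- a contradiction. Hence $N$ exists and the Chang--Makkai theorem yields $\theta_1,\dots,\theta_n$.

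The main obstacle is the construction in the second paragraph: one has to check that a single $\FO(\D^{(P)})$ formula can read off the orbit structure of the maximal $\D$-relations over $P$ \emph{uniformly} in the size of $P$, and that the spectrum so obtained can be forced to be genuinely non--first-order rather than accidentally first-order. Relativizability is exactly the hypothesis that lets the resulting contradiction descend from $\FO(\D^{(P)})$ back to $\FO(\D)$ (and thence, by strong first-orderness, to first-order logic); a proof dispensing with it would have to find some other route for importing the counting argument into $\FO(\D)$ itself.
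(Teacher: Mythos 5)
The paper does not actually reprove this statement --- it is imported from \cite{galliani19characterizing} (Theorem 4.5), and the footnote confirms that the $\theta_i$ are obtained there via the Chang--Makkai theorem --- so your top-level strategy of reducing to Chang--Makkai is the right one. But your proposal has a genuine gap exactly where you place ``the real work.'' First, you misstate the Chang--Makkai hypothesis: what must be verified is that \emph{every} model $(M,R)$ of $\Dmax(R)$ admits \emph{fewer than $2^{|M|}$} relations $R'$ with $(M,R')\models\Dmax(R')$; this is not equivalent to ``at most $N$ orbits under permutations of $M$.'' A single orbit of an infinite $M$ can already contain $2^{|M|}$ relations (take any $A$ with $|A|=|M\setminus A|=|M|$), so boundedly many orbits does not yield the hypothesis; conversely, a countable model may carry infinitely many non-conjugate maximal relations while still satisfying the $<2^{\aleph_0}$ bound. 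Your contradiction argument is therefore aimed at the wrong target: negating the correct hypothesis hands you one model with continuum-many maximal relations, not a sequence of domains with ever more non-conjugate ones.

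Second, and more seriously, the step that is supposed to derive a contradiction from ``too many maximal relations'' is only gestured at. The $\textbf{LO2}$/$\textbf{LO3}$ example yields a non-elementary spectrum because those dependencies are \emph{defined} to encode linear orders with parity constraints; an abstract $\D$ about which you know only that it has many maximal extensions gives you no handle whatsoever on divisibility of $|M|$, and no construction is offered for the $\FO(\D^{(P)})$ sentence that is claimed to force one. (The argument in \cite{galliani19characterizing} instead uses the abundance of relations to express a cardinality comparison between $\mathcal{P}(P^{\M})$ and the rest of the model, which violates compactness/L\"owenheim--Skolem --- a quite different mechanism from a divisibility spectrum.) You also never say concretely how downwards closure lets $\FO(\D^{(P)})$ atoms, which assert $\D$ rather than $\Dmax$ of a team's relation, detect maximal relations. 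As it stands the proposal correctly names the tool but does not establish its hypothesis, which is the entire content of the theorem.
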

To do so, it suffices to observe the following: 
\begin{proposition}
Let $\D$ be a strongly first order dependency. Then there exists a downwards closed strongly first order dependency $\F$ such that $\D \subseteq \F$ and $\Fmax = \Dmax$. Moreover, if $\D$ is relativizable then so is $\F$.
\label{propo:emax}
\end{proposition}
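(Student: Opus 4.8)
The plan is to take $\F$ to be the downward closure of $\D$, that is $\F := \{(M,R) : \exists S \supseteq R,\; (M,S) \in \D\}$. That $\F$ is downwards closed and that $\D \subseteq \F$ is immediate, and relativizability of $\F$ (when $\D$ is relativizable) will come out of relativizing the argument below. The equality $\Fmax = \Dmax$ needs no hypothesis: if $(M,R) \in \Dmax$ then $(M,R) \in \F$, and a proper superset $S \supsetneq R$ with $(M,S) \in \F$ would sit inside some $T \supseteq S \supsetneq R$ with $(M,T) \in \D$, contradicting maximality of $R$ in $\D$; conversely, if $(M,R) \in \Fmax$ then the $\D$-superset of $R$ witnessing $(M,R) \in \F$ cannot be proper (it would be a proper $\F$-superset of $R$), so $(M,R) \in \D$, and any proper $\D$-superset of it would again lie in $\F$, so $(M,R) \in \Dmax$.

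The real work is to show that $\F$ is strongly first order. For this I would fix, for each tuple $\tuple y$, fresh tuples $\tuple x, \tuple z$ of the arity of $\D$ and consider the $\FO(\D)$ formula
\[
\psi(\tuple y) \;:=\; \forall \tuple x\, \exists \tuple z\, \big(\, \D\tuple z \,\wedge\, (\tuple z = \tuple x \,\vee\, \tuple x \neq \tuple y)\,\big),
\]
and prove that $\M \models_X \psi(\tuple y)$ iff $\M \models_X \F\tuple y$ for every \emph{nonempty} team $X$. Unfolding the semantics: $\forall \tuple x$ turns $X$ into $X[M/\tuple x]$, and in $X[M/\tuple x][H/\tuple z]$ the disjunct $\tuple z = \tuple x \vee \tuple x \neq \tuple y$ forces every row whose $\tuple x$-value happens to equal that row's $\tuple y$-value to also have $\tuple z$ equal to that value; hence $H$ must send each such ``diagonal'' row arising from an $s \in X$ to $\{s(\tuple y)\}$, which guarantees that the resulting $\tuple z$-relation contains $X(\tuple y)$, while on the remaining (non-diagonal) rows $H$ is unconstrained, so — using that $M$ has at least two elements — the $\tuple z$-relation can be made to equal an arbitrary $S$ with $X(\tuple y) \subseteq S$. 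The conjunct $\D\tuple z$ then asserts precisely $(M,S) \in \D$ for such an $S$, i.e. $(M,X(\tuple y)) \in \F$.

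The one thing $\psi$ does not get right is the empty team: $\M \models_\emptyset \F\tuple y$ holds iff $(M,\emptyset) \in \F$, i.e. iff $\D$ has any instance over $M$ at all, whereas $\M \models_\emptyset \psi(\tuple y)$ holds only if $(M,\emptyset) \in \D$. This is patched at the level of sentences using strong first-orderness of $\D$ a second time: ``$(M,\emptyset) \in \D$'' is first order (substitute $\bot$ for the relation symbol in the defining sentence of $\D$), and ``$\D$ has a nonempty instance over $M$'' is expressed by the $\FO(\D)$ sentence $\exists \tuple z\, \D\tuple z$ and hence, by hypothesis, by a first order sentence; so ``$(M,\emptyset) \in \F$'' is first order definable. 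Given any sentence $\Phi \in \FO(\F)$, let $\Phi_+$ be obtained by replacing every atom $\F\tuple t$ with $\psi(\tuple t) \sqcup \bot$, where $\bot := \exists v\,(v \neq v)$ is satisfied exactly by the empty team, and let $\Phi_-$ be obtained by replacing every $\F\tuple t$ with $\psi(\tuple t)$. A routine induction on $\Phi$, using the team-level equivalence above, shows that $\Phi \equiv \Phi_+$ over all models with $(M,\emptyset) \in \F$ and $\Phi \equiv \Phi_-$ over all models with $(M,\emptyset) \notin \F$. Both $\Phi_\pm$ are sentences of $\FO(\D, \sqcup)$, hence — by Proposition \ref{propo:sqcup} applied to the strongly first order family $\{\D\}$ — equivalent to first order sentences; so $\Phi$ is equivalent to a first order ``if $(M,\emptyset) \in \F$ then $\Phi_+$ else $\Phi_-$'' combination of first order sentences, and is therefore first order. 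The ``moreover'' clause is obtained by relativizing every step (bound $\tuple x, \tuple z$ inside $P$, use $\D^{(P)}$ in place of $\D$, and note that ``$(P^\M,\emptyset) \in \F$'' is first order definable because $\D$ is relativizable and strongly first order), so that every sentence of $\FO(\F^{(P)})$ is first order and a fortiori lies in $\FO(\F)$.

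The main obstacle is the second step: writing a formula that constrains the auxiliary relation $X[M/\tuple x][H/\tuple z](\tuple z)$ to be an \emph{arbitrary superset} of $X(\tuple y)$ while staying inside $\FO(\D)$ — the naive choice would use an inclusion atom, which is not strongly first order, so the trick of generating the superset with the universal quantifier together with the ``diagonal-forcing'' disjunction $\tuple z = \tuple x \vee \tuple x \neq \tuple y$ is what makes it work. The secondary difficulty is noticing that the mismatch on the empty team cannot be repaired inside $\FO(\D,\sqcup)$ and must instead be handled by the first order case distinction on whether $(M,\emptyset) \in \F$.
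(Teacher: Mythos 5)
Your proposal is correct and takes essentially the same approach as the paper: $\F$ is the downward closure of $\D$, shown definable from $\D$ by the same ``diagonal-forcing'' $\forall\exists$ trick (the paper uses two fresh single variables $p,q$ with $p \neq q \vee \tuple y = \tuple x$ where you use a fresh tuple, which is immaterial), and then strong first-orderness follows from Proposition \ref{propo:sqcup}. The only real divergence is how the empty-team mismatch is patched --- the paper prefixes $\bot \sqcup$ to make the defining formula downwards closed outright and then conjoins the first order sentence equivalent to $\exists \tuple v\, \D \tuple v$, whereas you do a first order case split on $(M,\emptyset) \in \F$ at the sentence level --- and both repairs work.
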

\begin{proof}
		The dependency $\E \tuple x := \bot \sqcup (\forall p q \exists \tuple y ( (p \not = q \vee \tuple y = \tuple x) \wedge \D\tuple y))$ is strongly first order, because it is definable in terms of $\D$ and $\sqcup$ and because of Proposition \ref{propo:sqcup}, and $(M, R) \in \E$ iff $R = \emptyset$ or there is some $S \supseteq R$ such that $(M,S) \in \D$.\footnote{Note that the right disjunct alone would \emph{not} describe this property or be necessarily downwards closed. The problem is that, if we start from the empty team, we cannot ``enlarge'' it by choosing new values for $\tuple y$ for the assignments in which $p \not = q$ because no such assignments exist: so, if $\M \not \models_\emptyset \D \tuple x$, $\M \not \models_\emptyset (\forall p q \exists \tuple y ( (p \not = q \vee \tuple y = \tuple x) \wedge \D\tuple y)$ even though it may be the case that $\M \models_X \D \tuple y$ for some $X \not = \emptyset$. This is in essence the reason why we have to ``enforce'' the empty team property at this stage of the proof.} Thus $\E$ is also downwards closed.

Now since $\D$ is strongly first order $\exists \tuple v \D \tuple v$ is equivalent to some first order sentence $\chi$ over the empty signature, and $\M \models \chi$ iff there is some $R$ such that $(M, R) \in \D$. 
Then consider $\F = \{(M, R): M \models \chi \text{ and } (M, R) \in \E\}$. $\F$ is strongly first order, as any sentence $\phi \in \FO(\F)$ in which $\F$ occurs is equivalent to $\chi \wedge \phi[\E/\F]$, where $\phi[\E/\F]$ is the result of replacing every instance $\F\tuple t$ of $\F$ with $\E \tuple t$; $\F$ is still downwards closed; and $(M, R) \in \F$ iff there is some $S \supseteq R$ such that $(M, S) \in \D$. Therefore $\Dmax = \Fmax$, as required..

Note furthermore that if $\D$ is relativizable then so is $\E$, because $\E^{(P)} \tuple x$ is equivalent to \\$\bot \sqcup (\forall p q \exists \tuple y ( (p \not = q \vee \tuple y = \tuple x) \wedge \D^{(P)}\tuple y))$, and so is $\F$ as well, because any sentence $\phi \in \FO(\F^{(P)})$ in which $\F^{(P)}$ occurs is equivalent to\footnote{Here $\chi^{(P)}$ is the usual relativization of the first order sentence $\chi$ with respect to $P$.} $\chi^{(P)} \wedge \phi[\E^{(P)}/\F^{(P)}]$ and $\E$ is relativizable and strongly first order.
\end{proof}
%\begin{proposition}
%Let $\D$ be a strongly first order dependency. Then the dependency 
%\[
%\E \tuple x := \forall p q \exists \tuple y ( (p \not = q \vee \tuple y = \tuple x) \wedge \D(\tuple y))
%\]
%is strongly first order, is true in $(M, R)$ if and only if there exists some $S \supseteq R$ such that $(M, S) \in \D$, and is such that $\Dmax = \Emax$. 
%\label{propo:emax}
%\end{proposition}
%\begin{proof}
%    Since $\E$ is definable in terms of $\D$ and $\D$ is strongly first order, so is $\E$. The fact that $(M, R) \in \E$ iff there is some $S \supseteq R$ such that $(M, S) \in \D$ can be verified by checking the satisfaction conditions. The fact that $\Emax = \Dmax$ then follows at once. 
%    %But $\Emax = \Dmax$. Indeed, suppose that $(M, R) \in \Dmax$: then $(M, R) \in \D$, and therefore $(M, R) \in \E$. But if $R' \supsetneq R$ then $(M, R') \not \in \E$, since otherwise there would be some $S \supseteq R' \supsetneq R$ such that $(M, S) \in \D$, which is impossible because $R$ is maximal among the relations in $\D$. So $(M, R) \in \Emax$, as required. 
%
%%Conversely, suppose that $(M, R) \in \Emax$. Then $(M, R) \in \E$, and therefore there exists some $S \supseteq R$ such that $(M, R) \in \D$. But then it must be the case that $R=S$, since otherwise $(M, S)$ would be in $\E$ as well and $R$ would not be maximal among the relations in $\E$. So $(M, R) \in \D$. Now if $R' \supsetneq R$, it cannot be that $(M, R') \in D$, since otherwise $(M, R') \in \E$ which again would contradict the fact that $(M, R) \in \Emax$. Therefore $(M, R) \in \Dmax$, as required. 
%\end{proof}
The generalizations of the previous results then follow at once:% from Proposition \ref{propo:emax} and Proposition \ref{propo:dmax} and Theorem \ref{thm:dmaxdef} respectively:
\begin{proposition}
Let $\D$ be a strongly first order dependency. Then $\Dmax$ is also  strongly first order, and whenever $(M, R) \in \D$ there is some $R' \supseteq R$ such that $(M, R') \in \Dmax$.
\end{proposition}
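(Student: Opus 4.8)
The plan is to deduce this immediately from the downwards-closed case (Proposition \ref{propo:dmax}) via the auxiliary dependency constructed in Proposition \ref{propo:emax}. So let $\D$ be strongly first order, and let $\F$ be the dependency provided by Proposition \ref{propo:emax}: it is downwards closed, strongly first order, satisfies $\D \subseteq \F$, and has $\Fmax = \Dmax$. Since $\F$ is downwards closed and strongly first order, Proposition \ref{propo:dmax} applies to it directly.

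From Proposition \ref{propo:dmax} we get two conclusions about $\F$. First, $\Fmax$ is strongly first order; but $\Fmax = \Dmax$, so $\Dmax$ is strongly first order, which is the first half of the claim. Second, whenever $(M, R) \in \F$ there is some $R' \supseteq R$ with $(M, R') \in \Fmax$. To obtain the second half, suppose $(M, R) \in \D$. Because $\D \subseteq \F$, we have $(M, R) \in \F$, so there is some $R' \supseteq R$ with $(M, R') \in \Fmax = \Dmax$, as required.

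There is essentially no obstacle left at this stage: all the real work has been moved into Proposition \ref{propo:emax} (which reduces an arbitrary strongly first order $\D$ to a downwards closed one with the same ``maximal'' part, using the $\sqcup$-trick of Proposition \ref{propo:sqcup} to enforce the empty team property and a first order ``there exists a witnessing relation'' guard) and into the previously established downwards-closed result of \cite{galliani19characterizing}. The only point requiring a moment's care is to check that $\F$ genuinely meets the hypotheses of Proposition \ref{propo:dmax}, namely that it is simultaneously downwards closed \emph{and} strongly first order — both of which are explicitly part of the conclusion of Proposition \ref{propo:emax} — and that the identity $\Fmax = \Dmax$ (also part of that proposition) is what lets us transport both conclusions back to $\D$. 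Hence the statement follows as an immediate corollary.
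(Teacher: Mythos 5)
Your proof is correct and is exactly the argument the paper intends: the paper states that the generalization "follows at once" from Proposition \ref{propo:emax} together with the downwards-closed case (Proposition \ref{propo:dmax}), which is precisely the reduction you carry out. Nothing further is needed.
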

\begin{theorem}
	Let $\D$ be a strongly first order, relativizable dependency. Then there are first order formulas $\theta_1(\tuple x, \tuple z)$ \ldots $\theta_n(\tuple x, \tuple z)$ over the empty signature such that, for all models $\M = (M, R)$, 
	\[
		\M \models \Dmax(R) \Rightarrow \M \models \bigvee_{i=1}^n (\exists \tuple z \forall \tuple x (R \tuple x \leftrightarrow \theta_i(\tuple x, \tuple z)))
	\]
	\label{thm:emaxdef}
\end{theorem}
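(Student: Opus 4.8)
The plan is to reduce the statement directly to the downwards closed case already established in Theorem~\ref{thm:dmaxdef}, using the auxiliary dependency $\F$ constructed in Proposition~\ref{propo:emax}. The point is that Theorem~\ref{thm:dmaxdef} speaks only about $\Dmax$, and Proposition~\ref{propo:emax} tells us that the maximal part of a strongly first order dependency is left unchanged when we pass to the downwards closed (still strongly first order, still relativizable) dependency $\F$ it produces; so applying the downwards closed result to $\F$ and then rewriting $\Fmax$ as $\Dmax$ will give exactly what we want.

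Concretely, I would proceed as follows. First, since $\D$ is strongly first order and relativizable, apply Proposition~\ref{propo:emax} to obtain a dependency $\F$ that is downwards closed, strongly first order and relativizable, with $\D \subseteq \F$ and $\Fmax = \Dmax$. Next, $\F$ now satisfies precisely the hypotheses of Theorem~\ref{thm:dmaxdef}, so there are first order formulas $\theta_1(\tuple x, \tuple z), \ldots, \theta_n(\tuple x, \tuple z)$ over the empty signature such that, for all models $\M = (M,R)$,
\[
\M \models \Fmax(R) \Rightarrow \M \models \bigvee_{i=1}^n (\exists \tuple z \forall \tuple x (R \tuple x \leftrightarrow \theta_i(\tuple x, \tuple z))),
\]
where $\Fmax(R)$ denotes a first order sentence defining $\Fmax$ (such a sentence exists because $\Fmax$ is strongly first order, hence first order, by Proposition~\ref{propo:dmax} applied to $\F$). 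Finally, I would invoke the equality $\Fmax = \Dmax$ as classes of models: both classes are first order definable (the second because $\D$ strongly first order implies $\Dmax$ strongly first order, hence first order), so the sentences $\Fmax(R)$ and $\Dmax(R)$ hold in exactly the same models $(M,R)$ and are therefore logically equivalent. Substituting $\Dmax(R)$ for $\Fmax(R)$ in the displayed implication yields the statement of the theorem, with the very same formulas $\theta_i$.

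There is essentially no remaining obstacle: all the genuine content has already been packaged into Proposition~\ref{propo:emax} (the construction of $\F$ via $\bot \sqcup (\ldots)$, the verification that it is downwards closed, strongly first order and relativizable, and the identity $\Fmax = \Dmax$) and into Theorem~\ref{thm:dmaxdef} (the definability statement itself, obtained there through the Chang--Makkai theorem). The only thing requiring care is the bookkeeping around the sentences $\Dmax(R)$ and $\Fmax(R)$ — one must explicitly note that ``strongly first order'' entails ``first order'' so that these sentences exist, and that equality of the two dependencies as classes of models then gives equivalence of those sentences. Once this is observed, the theorem follows at once from the two cited results.
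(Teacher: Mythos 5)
Your proposal is correct and is exactly the argument the paper intends: the paper derives Theorem~\ref{thm:emaxdef} ``at once'' from Proposition~\ref{propo:emax} by applying the downwards closed case (Theorem~\ref{thm:dmaxdef}) to the auxiliary dependency $\F$ and using $\Fmax = \Dmax$. Your extra bookkeeping about the first order definability of $\Dmax(R)$ and $\Fmax(R)$ is a reasonable clarification but introduces nothing beyond what the cited results already provide.
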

For our next lemma, we need some model-theoretic machinery: 
\begin{definition}[$\omega$-big models --- \cite{hodges97b}, \S 8.1]
A model $\A$ is $\omega$-big if for all finite tuples $\tuple a$ of elements in $\A$ and for all models $\B$ with the same signature as $\A$, if $\tuple b$ is such that\footnote{Here we write $(\mathfrak A, \tuple a)$ for the model obtained by adding to the signature of $\mathfrak A$ a tuple of new constant symbols $\tuple c = c_1 \ldots c_n$, interpreting them as $\tuple a = a_1 \ldots a_n$. The expression $(\mathfrak B, \tuple b)$ is to be interpreted similarly, for the same choice of new constant symbols. Similarly, an expression of the form $(\mathfrak A, R, \tuple a)$ adds also a new relation symbol to the signature, interpreted as the relation $R$, and so forth.} $(\mathfrak A, \tuple a) \equiv (\mathfrak B, \tuple b)$ and $S$ is a relation over $\B$ then we can find a relation $R$ over $\mathfrak A$ such that $(\mathfrak A, R, \tuple a) \equiv (\mathfrak B, S, \tuple b)$. 
\end{definition}
\begin{definition}[$\omega$-saturated models --- \cite{hodges97b}, \S 8.1]
A model $\M$ is $\omega$-saturated if it realizes all complete $1$-types with respect to $\M$ over any finite parameter set.\footnote{All the details can be found in \cite{hodges97b}, or in other model theory textbooks. Very briefly, a complete $1$-type with respect to $\M$ over some finite parameter set $\{m_1 \ldots m_n\} \subseteq M$ is a set of formulas $\Phi$ of the form $\{\phi(m', m_1 \ldots m_n) : \mathfrak N \models \phi(m', m_1 \ldots m_n)\}$ for some elementary extension $\mathfrak N \succeq \M$ and some element $m'$ of $\mathfrak N$, where $\phi$ ranges over all first order formulas with $n+1$ free variables; and $\M$ realizes such a type if there is some element $m_0 \in M$ such that $\M \models \phi(m_0, m_1 \ldots m_n)$ for all $\phi \in \Phi$. In other words, a complete $1$-type with respect to $\M$ over $m_1 \ldots m_n$ is a description (in terms of first order formulas with parameters in $m_1 \ldots m_n$) of some element that exists in some elementary extension of $\M$; and if $\M$ is $\omega$-saturated, any such description also describes some element that exists in $\M$ itself.}

\end{definition}
The three following results can be found in \cite{hodges97b}:\footnote{We report here only the parts of the results that are relevant for this work.}
\begin{theorem}[\cite{hodges97b}, Theorem 8.1.2]
If a model $\M$ is $\omega$-big then it is $\omega$-saturated. 
\label{thm:ifbigthensat}
\end{theorem}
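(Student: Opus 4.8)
The plan is to unfold the definitions of $\omega$-bigness and $\omega$-saturation and show that the first directly yields the second. Suppose $\M$ is $\omega$-big, fix a finite parameter set $\bar m = m_1 \ldots m_n \subseteq M$, and let $\Phi$ be a complete $1$-type with respect to $\M$ over $\bar m$. By definition there is an elementary extension $\mathfrak N \succeq \M$ and an element $m'$ of $\mathfrak N$ such that $\Phi = \{\phi(m', \bar m) : \mathfrak N \models \phi(m', \bar m)\}$. I want to locate inside $\M$ an element realizing $\Phi$.

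First I would set up the data to which $\omega$-bigness applies. Take $\B := \mathfrak N$, with the same signature as $\M$, and take $\bar a := \bar m$ (viewed in $\M$) and $\bar b := \bar m$ (viewed in $\mathfrak N$). Since $\mathfrak N \succeq \M$, we certainly have $(\M, \bar m) \equiv (\mathfrak N, \bar m)$, i.e. $(\mathfrak A, \bar a) \equiv (\mathfrak B, \bar b)$ in the notation of the definition. Now I need the ``extra structure'' on $\B$ to be the element $m'$: the cleanest way is to take $S$ to be the unary relation $\{m'\}$ over $\B = \mathfrak N$. Applying $\omega$-bigness to $\bar a$, $\B$, $\bar b$ and $S$, we obtain a relation $R$ over $\M$ with $(\M, R, \bar m) \equiv (\mathfrak N, \{m'\}, \bar m)$.

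Next I would extract the realizing element from $R$. In $(\mathfrak N, \{m'\}, \bar m)$ the sentence ``there is exactly one element in the new unary relation'' is true, so by elementary equivalence $R = \{m_0\}$ for a unique $m_0 \in M$. It remains to check $m_0 \models \Phi$: for each $\phi \in \Phi$ we have $\mathfrak N \models \phi(m', \bar m)$, hence $(\mathfrak N, \{m'\}, \bar m) \models \forall x\, (x \in R \to \phi(x, \bar m))$ where I write $x \in R$ for the new unary relation symbol applied to $x$; by the elementary equivalence this sentence holds in $(\M, R, \bar m)$ as well, and since $R = \{m_0\}$ this gives $\M \models \phi(m_0, \bar m)$. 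Thus $m_0$ realizes $\Phi$, and as $\bar m$ and $\Phi$ were arbitrary, $\M$ is $\omega$-saturated.

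I do not expect a serious obstacle here, since the theorem is a standard fact from \cite{hodges97b} and the argument is essentially a packaging exercise; the only point requiring mild care is the encoding of ``a single element $m'$'' as the relation $S$ that the definition of $\omega$-bigness expects, together with checking that unary-relation cardinality statements and the relativized formulas $\forall x (x \in R \to \phi(x,\bar m))$ are first order and hence transfer across the elementary equivalence. (If one prefers, $m'$ can instead be fed in via a new constant symbol rather than a unary relation, using the constant-symbol variant of the definition mentioned in the footnote; the argument is the same.)
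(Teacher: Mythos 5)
The paper does not prove this statement; it is quoted verbatim from Hodges (Theorem 8.1.2) as background, so there is no in-paper argument to compare against. Your proof is correct and is essentially the standard one: encoding the realizing element $m'$ of the elementary extension as the singleton unary relation $S=\{m'\}$, invoking $\omega$-bigness to pull back a relation $R$ with $(\M,R,\tuple m)\equiv(\mathfrak N,\{m'\},\tuple m)$, using the first-order sentence ``$R$ has exactly one element'' to get $R=\{m_0\}$, and transferring each $\forall x(Rx\to\phi(x,\tuple m))$ across the elementary equivalence. All the hypotheses of the definition of $\omega$-bigness as stated in the paper are verified (in particular $(\M,\tuple m)\equiv(\mathfrak N,\tuple m)$ follows from $\mathfrak N\succeq\M$), so there is no gap.
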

\begin{theorem}[\cite{hodges97b}, Theorem 8.2.1]
Every model has a $\omega$-big elementary extension. 
\label{thm:bigexists}
\end{theorem}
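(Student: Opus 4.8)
The plan is to obtain the desired extension as the union of an $\omega$-long elementary chain, each step of which is produced by a compactness argument that simultaneously realizes every ``requirement'' of the model built so far. First I would isolate what must be realized: call a \emph{requirement over a model $\A$} (of signature $L$) a pair $(\tuple a, \Theta)$, where $\tuple a$ is a finite tuple from $\A$ and $\Theta$ is a complete consistent theory in the language $L$ augmented by new constants $d_1 \ldots d_k$ (with $k$ the length of $\tuple a$) and a new relation symbol $\dot R$, whose reduct to $L \cup \{d_1 \ldots d_k\}$ is the theory of $(\A, \tuple a)$. The point is that $\A$ is $\omega$-big exactly when for every requirement $(\tuple a, \Theta)$ over $\A$ there is a relation $R$ over $\A$ with $(\A, R, \tuple a) \models \Theta$: all that a triple $(\B, S, \tuple b)$ with $(\A, \tuple a) \equiv (\B, \tuple b)$ contributes is the complete theory $\Theta$ of $(\B, S, \tuple b)$ in this language, which is then a requirement over $\A$, and conversely every such $\Theta$ is realized by some $(\B, S, \tuple b)$. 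Crucially, for a fixed signature and a fixed arity of $\dot R$ there is only a \emph{set} of such theories (at most $2^{|L| + \aleph_0}$), hence only a set of requirements over $\A$, even though $\B$, $S$, $\tuple b$ range over a proper class; this is what keeps the construction legitimate.

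Next I would prove a one-step lemma: for any model $\mathfrak N$ whose signature $L'$ extends $L$, there is a model $\mathfrak N^{+}$ in a signature $L'' \supseteq L'$ obtained by adjoining a fresh relation symbol $R_{(\tuple a,\Theta)}$ for each requirement $(\tuple a,\Theta)$ over the $L$-reduct of $\mathfrak N$, such that the $L'$-reduct of $\mathfrak N^{+}$ is an elementary extension of $\mathfrak N$ and each $R_{(\tuple a,\Theta)}^{\mathfrak N^{+}}$ witnesses the corresponding requirement. This I would get by compactness applied to the theory consisting of the $L'$-elementary diagram of $\mathfrak N$ together with, for each requirement, the sentences of $\Theta$ with $\dot R$ renamed to $R_{(\tuple a,\Theta)}$ and each $d_i$ replaced by a name of $a_i$: a finite fragment involves only finitely many requirements, and since each $\Theta$ extends the theory of the corresponding $(\mathfrak N{\restriction}L, \tuple a)$, those finitely many can be satisfied together inside a single $L'$-elementary extension of $\mathfrak N$ with finitely many relations adjoined — this last point is the technical heart, proved by iterated elementary amalgamation (Robinson's joint consistency theorem being the convenient tool, as the shared reducts involved are complete theories). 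I would then iterate from $\mathfrak N_0 := \M$, obtaining an increasing chain of signatures $L = L_0 \subseteq L_1 \subseteq \cdots$ and models $\mathfrak N_n$ with $\mathfrak N_{n+1}{\restriction}L_n \succeq \mathfrak N_n$, set $\mathfrak N^{*} := \bigcup_n \mathfrak N_n$ (a structure in $\bigcup_n L_n$), and finally take $\A := \mathfrak N^{*}{\restriction}L$.

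It would then remain to check that $\A$ is an $\omega$-big elementary extension of $\M$. Elementarity is the elementary chain theorem applied to the $L$-reducts (which also gives $\mathfrak N_n{\restriction}L \preceq \A$ for every $n$). For $\omega$-bigness, given a requirement $(\tuple a,\Theta)$ over $\A$, the finite tuple $\tuple a$ already lies in some $\mathfrak N_n$, and because $\mathfrak N_n{\restriction}L \preceq \A$ the theory of $(\mathfrak N_n{\restriction}L,\tuple a)$ equals that of $(\A,\tuple a)$, so $(\tuple a,\Theta)$ is also a requirement over $\mathfrak N_n{\restriction}L$; hence $L_{n+1}$ contains a symbol $R_{(\tuple a,\Theta)}$ with $(\mathfrak N_{n+1}{\restriction}L, R_{(\tuple a,\Theta)}^{\mathfrak N_{n+1}}, \tuple a) \models \Theta$, and since $\mathfrak N_{n+1} \preceq \mathfrak N^{*}{\restriction}L_{n+1}$, passing to the reduct to the sublanguage $L \cup \{R_{(\tuple a,\Theta)}\}$ shows that the interpretation of that symbol in $\mathfrak N^{*}$ realizes $(\tuple a,\Theta)$ over $\A$. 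I expect the main obstacle to be twofold: getting the one-step amalgamation right, in particular the sub-step realizing finitely many requirements simultaneously in a single elementary extension; and, more conceptually, recognizing that the witnessing relations must be carried along as interpretations of new symbols in an expanding signature, since a naive union of $L$-structures would not preserve a relation that witnessed a requirement at a finite stage into the limit. (If $\M$ is finite — in particular if $|M|=1$ — the statement is trivial: $\M$ is then already $\omega$-big and serves as its own extension.)
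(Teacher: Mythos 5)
Your proof is correct and is essentially the standard argument for this classical result, which the paper simply cites from Hodges (Theorem 8.2.1) without reproducing a proof: a union of an $\omega$-chain of elementary extensions, where each step uses compactness (via Robinson-style amalgamation over the complete theory $\mathrm{Th}(\mathfrak N{\restriction}L,\tuple a)$) to realize the set of all ``requirements'' as interpretations of freshly adjoined relation symbols, so that the elementary chain theorem in the expanded signatures carries the witnesses to the limit. You correctly identify the two genuine subtleties --- that there is only a set of requirements to realize, and that witnessing relations must be named by new symbols to survive the union --- so nothing further is needed.
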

\begin{theorem}[\cite{hodges97b}, Lemma 8.3.4]
Let $\A$ and $\B$ be $\omega$-saturated structures over a finite signature (containing some relation symbol $R$) such that, for all first order sentences $\psi^+$ in which $R$ occurs only positively, $\A \models \psi^+ \Rightarrow \B \models \psi^+$. Then there are elementary substructures $\mathfrak C$ and $\mathfrak D$ of $\mathfrak A$ and $\mathfrak B$ respectively and a bijective homomorphism $\mathfrak h: \mathfrak C \rightarrow \mathfrak D$ that fixes all symbols except $R$. 
\label{thm:lyndon}
\end{theorem}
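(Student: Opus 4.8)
The plan is to prove this by a back-and-forth construction of length $\omega$, using $\omega$-saturation at each step, with the hypothesis entering through the fact that the positive fragment is closed under \emph{all} of $\wedge$, $\vee$, $\exists$, $\forall$. First I would fix notation: write $L$ for the (finite) signature, $L^-$ for $L$ without the symbol $R$, and let $\mathcal F$ be the class of $L$-formulas in negation normal form built from $L^-$-literals (atomic $L^-$-formulas and their negations) and \emph{unnegated} $R$-atoms using $\wedge$, $\vee$, $\exists$, $\forall$; up to logical equivalence these are exactly the formulas in which $R$ occurs only positively, so the hypothesis states that $\A \models \varphi \Rightarrow \B \models \varphi$ for every sentence $\varphi \in \mathcal F$. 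Call a finite partial map sending a tuple $\tuple a$ from $\A$ to a tuple $\tuple b$ from $\B$ \emph{good} if $\A \models \varphi(\tuple a) \Rightarrow \B \models \varphi(\tuple b)$ for all $\varphi(\tuple x) \in \mathcal F$; in particular the empty map is good.

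The two extension lemmas are the core. \emph{Forth:} if $\tuple a \mapsto \tuple b$ is good and $a' \in A$, then some $b' \in B$ makes $\tuple a a' \mapsto \tuple b b'$ good. Indeed $\Sigma(y) = \{\varphi(\tuple b, y) : \varphi(\tuple x, y) \in \mathcal F,\ \A \models \varphi(\tuple a, a')\}$ is finitely satisfiable in $\B$, since a finite conjunction $\varphi^*$ of its generators makes $\exists y\, \varphi^*(\tuple x, y) \in \mathcal F$ true of $\tuple a$ in $\A$ and hence, by goodness, true of $\tuple b$ in $\B$; so by $\omega$-saturation of $\B$ some $b'$ realizes $\Sigma(y)$, goodness of the extension is immediate, and because term equalities such as $y = x_i$ and $f(\tuple x) = y$ lie in $\mathcal F$ the extension remains a well-defined injective partial map that matches the $L$-functions wherever both a tuple and its value are placed. \emph{Back:} if $\tuple a \mapsto \tuple b$ is good and $b' \in B$, then some $a' \in A$ makes $\tuple a a' \mapsto \tuple b b'$ good: realize in $\A$, over $\tuple a$, the type $\Gamma(y) = \{\lnot\varphi(\tuple a, y) : \varphi(\tuple x, y) \in \mathcal F,\ \B \models \lnot\varphi(\tuple b, b')\}$, which is finitely satisfiable because if $\A \models \forall y\, \bigvee_{i=1}^k \varphi_i(\tuple a, y)$ then, as $\forall y\, \bigvee_i \varphi_i(\tuple x, y) \in \mathcal F$, goodness gives $\B \models \forall y\, \bigvee_i \varphi_i(\tuple b, y)$, contradicting $\B \models \lnot\varphi_i(\tuple b, b')$ for every $i$; now invoke $\omega$-saturation of $\A$. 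This is precisely the step that needs positivity: $\mathcal F$ must be closed under $\forall$.

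Finally I would assemble the substructures by running a back-and-forth of length $\omega$, building good finite partial maps $p_0 \subseteq p_1 \subseteq \cdots$ with bookkeeping (possible because $L$ is finite, hence countable) that, besides alternating forth and back steps, also (i) for each already-placed tuple and each function or constant symbol adds the corresponding $\A$-value to the domain (its $\B$-image then being the matching $\B$-value, by goodness applied to term equalities), and (ii) for each $L$-formula $\exists z\, \psi(z, \tuple x)$ and each placed tuple $\tuple a$ with $\A \models \exists z\, \psi(z, \tuple a)$ adds a witness to the domain, and symmetrically on the range side for $\B$. Setting $\mathfrak C = \bigcup_n \dom(p_n)$, $\mathfrak D = \bigcup_n \mathrm{ran}(p_n)$ and $\mathfrak h = \bigcup_n p_n$, the sets $\mathfrak C$, $\mathfrak D$ are closed under the $L$-functions and so are substructures of $\A$, $\B$; by (ii) and the Tarski--Vaught test $\mathfrak C \preceq \A$ and $\mathfrak D \preceq \B$; $\mathfrak h$ is a bijection $\mathfrak C \to \mathfrak D$ by construction and by injectivity (apply goodness to $x_i \neq x_j$); goodness of each $p_n$ applied to the $L^-$-literals and their negations shows $\mathfrak h$ preserves and reflects all $L^-$-atomic formulas, and applied to $R\tuple x$ shows it preserves $R$; and since $R^{\mathfrak C} = R^\A \cap \mathfrak C^{k}$ and likewise for $\mathfrak D$, the map $\mathfrak h$ is a bijective homomorphism fixing every symbol except $R$, as required.

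The one genuinely delicate point is the back step and the essential use of $R$-positivity there (dropping positivity breaks closure of $\mathcal F$ under $\forall$); the rest — organizing the bookkeeping so that bijectivity, closure under the function symbols, and the Tarski--Vaught condition on \emph{both} sides are all secured within a single $\omega$-length construction — is routine but should be written out with some care.
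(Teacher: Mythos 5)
This statement is quoted verbatim from Hodges (Lemma 8.3.4) and the paper gives no proof of it, so there is nothing internal to compare against; your back-and-forth argument is correct and is essentially the standard proof of this preservation lemma. Both extension steps are right --- in particular the back step correctly exploits closure of the positive-in-$R$ fragment under $\forall$, which is the crux --- and the bookkeeping that secures closure under the function symbols, the Tarski--Vaught condition on both sides, and bijectivity of $\mathfrak h$ is handled properly.
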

The following lemma shows that, in the case of $\omega$-big models, any relation $R$ that satisfies a strongly first order, relativizable, non-jumping dependency $\D$ must satisfy some sentence $\eta(R)$, having a certain specific form, that entails $\D(R)$ over \emph{all} models:
\begin{lemma}
	Let $\D$ be a strongly first order, relativizable, non-jumping dependency, and suppose that $(M, R)$ is a $\omega$-big model such that $(M, R) \in \D$. Then there exist a formula $\psi^+(R, \tuple z)$ over the signature $\{R\}$, positive in $R$, and a formula $\theta(\tuple x, \tuple z)$ over the empty signature such that 
	\begin{enumerate}
		\item $(M, R) \models \exists \tuple z(\psi^+(R, \tuple z) \wedge \forall \tuple x(R \tuple x \rightarrow \theta(\tuple x, \tuple z)))$; 
		\item $\exists \tuple z(\psi^+(R, \tuple z) \wedge \forall \tuple x(R \tuple x \rightarrow \theta(\tuple x, \tuple z))) \models \D(R)$. 
	\end{enumerate}
	\label{lemma:bigpsitheta}
\end{lemma}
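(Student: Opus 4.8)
The goal is to extract, from the single relation $R$ satisfying $\D$ over the $\omega$-big model $(M,R)$, a syntactic ``template'' of the required form. The natural first move is to pass to the maximal enlargement guaranteed by the non-jumping hypothesis: since $(M,R) \in \D$, Definition \ref{def:nonjump} gives some $R' \supseteq R$ with $(M,R') \in \Dmax$ and with every $S$ satisfying $R \subseteq S \subseteq R'$ also in $\D$. Now apply Theorem \ref{thm:emaxdef} to $(M,R')$: since $(M,R') \models \Dmax(R')$, there is an index $i$ and a tuple $\tuple c$ in $M$ such that $R' = \{\tuple a : (M,R') \models \theta_i(\tuple a, \tuple c)\}$, i.e. $R'$ is exactly the set defined by the empty-signature formula $\theta_i(\tuple x, \tuple z)$ with parameters $\tuple c$. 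Set $\theta := \theta_i$. This already delivers the ``ceiling'': any $S$ with $S \subseteq R'$ is captured by $\forall \tuple x (S\tuple x \rightarrow \theta(\tuple x, \tuple c))$.

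\textbf{Building $\psi^+$.} The remaining task is to pin down enough of $R$ ``from below'' with a formula positive in $R$, so that together with the ceiling $\theta$ it forces membership in $\D$. The idea is this: we want $\psi^+(R,\tuple z)$ to assert (with $\tuple z = \tuple c$) that $R$ contains a specific first-order-describable ``core'' — enough that \emph{any} relation $R^*$ lying between that core and the $\theta$-ceiling satisfies $\D$. Because $\D$ is relativizable, whether $(N, R^*) \in \D$ depends, via the relativized atom $\D^{(P)}$, essentially on the isomorphism type of $R^*$ relative to the elements it touches; and because $\D$ is strongly first order, I would use Theorem \ref{thm:emaxdef} and a compactness/type argument to see that the condition ``$R^*$ lies between a suitable core and the ceiling $\theta$'' is first-order expressible with $R$ occurring only positively (the core is a lower bound on $R$, hence positive occurrences). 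Concretely, I would let $\psi^+(R,\tuple z)$ be a conjunction of formulas of the form $\exists \tuple x (R\tuple x \wedge \alpha(\tuple x, \tuple z))$ for finitely many empty-signature $\alpha$'s, chosen so that their conjunction holding, together with $\forall \tuple x(R\tuple x \rightarrow \theta(\tuple x,\tuple z))$, determines the $\equiv$-type of the structure $(N, R^*, \tuple z)$ enough to guarantee $(N,R^*) \in \D$. That the $\omega$-big model realizes the needed type — so that clause (1) holds — is exactly what $\omega$-bigness (and hence, by Theorem \ref{thm:ifbigthensat}, $\omega$-saturation) buys us: the ``core'' description of $R$ is realized inside $(M,R)$ itself.

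\textbf{Where the real work is.} Clause (1) is the comparatively routine half: once $\psi^+$ and $\theta$ are identified, checking $(M,R) \models \exists \tuple z(\psi^+ \wedge \forall \tuple x(R\tuple x \rightarrow \theta))$ amounts to exhibiting $\tuple c$, which we already have. The hard part is clause (2): showing the \emph{implication} $\exists \tuple z(\psi^+(R,\tuple z) \wedge \forall \tuple x(R\tuple x \rightarrow \theta(\tuple x,\tuple z))) \models \D(R)$ over all models. I expect the argument to go by contraposition plus saturation: suppose some $(N, R^*) \models \exists \tuple z(\psi^+ \wedge \ldots)$ but $(N,R^*) \notin \D$. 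By Theorem \ref{thm:bigexists} pass to an $\omega$-big (hence $\omega$-saturated) elementary extension, where the $R^*$ still fails $\D$ and still satisfies the template. Then one compares $(M,R)$ (or rather the core it witnesses) with $(N,R^*)$ via the back-and-forth / Lyndon-style machinery of Theorem \ref{thm:lyndon}: positive-in-$R$ sentences satisfied by the smaller structure transfer upward, and using relativizability to restrict attention to the universe actually ``seen'' by the relation, one derives that $(N,R^*)$ must after all sit between an enlargeable relation and its $\Dmax$-ceiling, contradicting $(N,R^*) \notin \D$ via the non-jumping clause. Getting the positivity of $\psi^+$ to interact correctly with the homomorphism of Theorem \ref{thm:lyndon} — so that the core of $R$ maps into $R^*$ while the ceiling $\theta$ is preserved in the other direction — is the delicate point, and it is where the three hypotheses (strong first-orderness, relativizability, non-jumping) all have to be used simultaneously.
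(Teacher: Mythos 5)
Your skeleton matches the paper's: use non-jumping to get $R' \supseteq R$ with $(M,R') \in \Dmax$ and everything between $R$ and $R'$ in $\D$; apply Theorem \ref{thm:emaxdef} to obtain $\theta$ and parameters defining $R'$; then prove the entailment in clause (2) by contraposition, saturating the counterexample and invoking Theorem \ref{thm:lyndon}. But there are two concrete gaps in how you close the argument.

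First, the order of quantification between ``choose $\psi^+$'' and ``prove entailment'' is backwards, and your proposed shape for $\psi^+$ is too restrictive. You fix in advance a finite conjunction of formulas of the special form $\exists \tuple x(R\tuple x \wedge \alpha(\tuple x,\tuple z))$ and then try to show that this, plus the ceiling, entails $\D(R)$. But Theorem \ref{thm:lyndon} needs \emph{every} sentence positive in $R$ that holds in $(M,R,\tuple a)$ to transfer to the counterexample $(B,S,\tuple b)$; a finite conjunction of single-occurrence existential-positive formulas gives you nothing of the sort, so the bijective homomorphism never materializes. The paper instead takes $\Psi$ to be the \emph{entire} positive-in-$R$ theory of $(M,R,\tuple a)$, shows $\Psi \cup \{\forall\tuple x(R\tuple x\rightarrow\theta(\tuple x,\tuple a)),\lnot\D(R)\}$ is unsatisfiable (here Lyndon applies, since a model of $\Psi$ satisfies all positive sentences), and only afterwards uses compactness to extract a finite $\Psi_0$; the resulting $\psi^+$ is an arbitrary finite conjunction of positive formulas, not of your restricted shape.

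Second, your contradiction is located in the wrong model and uses non-jumping in a way its definition does not support. Non-jumping is an implication \emph{from} $(M,R)\in\D$, not a criterion for concluding that some $(N,R^*)$ sitting ``between a core and a ceiling'' is in $\D$. The actual contradiction must be pulled back into $(M,R)$ itself: from the bijective homomorphism $\mathfrak h$ one gets $Q''=\mathfrak h^{-1}(K)\supseteq Q$ with $(A,Q'',\tuple a)\models\lnot\D(Q'')$ and $Q''$ inside the $\theta$-ceiling; then, because $(A,Q,\tuple a)\equiv(M,R,\tuple a)$ and $(M,R,\tuple a)$ is $\omega$-big, this expansion is realized over $M$ as some $R''$ with $R\subseteq R''\subseteq R'$ and $(M,R'')\notin\D$ --- contradicting the property of the $R'$ you chose at the outset. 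This pull-back step is the reason $\omega$-bigness (rather than mere saturation) of $(M,R)$ is hypothesized, and it is absent from your sketch. Your appeal to relativizability as constraining ``the isomorphism type of $R^*$ relative to the elements it touches'' is also a misreading: relativizability enters only through Theorem \ref{thm:emaxdef} and plays no separate role in this lemma's proof.
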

\begin{proof}
\begin{figure}
\begin{center}
\includegraphics{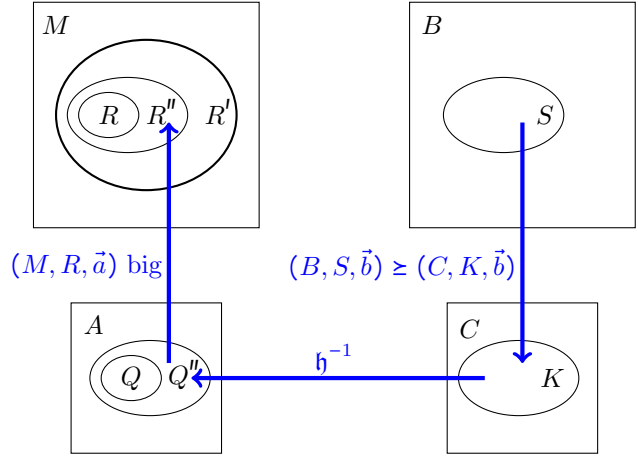}
\end{center}
\caption{The key steps of the proof of Lemma \ref{lemma:bigpsitheta}. Every relation in $M$ between $R$ and $R'$ must satisfy $\D$; but $(B, S) \not \in \D$, so $(C, K) \not \in \D$, so $(A, Q'') \not \in \D$. But $(A, Q, \tuple a) \equiv (M, R, \tuple a)$, $Q \subseteq Q''$, all tuples $\tuple m \in Q''$ satisfy $\theta(\tuple m, \tuple a)$ and $(M, R, \tuple a)$ is $\omega$-big, so there is some $R''$ between $R$ and $R'$ such that $(M, R'') \not \in \D$.}
\label{figure1}
\end{figure}
Since $\D$ is non-jumping, we can find a $R'$ such that $R \subseteq R'$, $(M, R') \in \Dmax$, and $(M, R'') \in \D$ for all $R''$ such that $R \subseteq R'' \subseteq R'$. But then by Theorem \ref{thm:emaxdef} there exist a first order formula $\theta(\tuple x, \tuple z)$ over the empty signature and a tuple of elements $\tuple a$ such that $R' = \{\tuple m \in M^k: (M,R) \models \theta(\tuple m, \tuple a)\}$. 

Now consider $\Psi = \{\eta^+(R, \tuple a): R \text{ occurs only positively in } \eta^+ \text{ and } (M,R) \models \eta^+(R, \tuple a)\}$. 
	
I state that $\Psi \cup \{\forall \tuple x(R \tuple x \rightarrow \theta(\tuple x, \tuple a)), \lnot \D(R)\}$ is unsatisfiable. Indeed, suppose that it is satisfiable, and let $\B = (B, S, \tuple b)$ be a model that satisfies it. By Theorems \ref{thm:ifbigthensat} and \ref{thm:bigexists}, we can assume that $\B$ is $\omega$-saturated. 
 
 Now, since every formula positive in $R$ that is true of $(M, R, \tuple a)$ is also true of $(B, S, \tuple b)$ and both are $\omega$-saturated\footnote{It is trivial to see that if $\mathfrak A$ is $\omega$-big or $\omega$-saturated and $\tuple a$ is a finite tuple of constants then so is $(\mathfrak A, \tuple a)$.}, by Theorem \ref{thm:lyndon} we have that there exist elementary substructures $(A, Q, \tuple a)$ and $(C, K, \tuple b)$ of $(M, R, \tuple a)$ and $(B, S, \tuple b)$ respectively such that $C$ is the image of a bijective homomorphism $\mathfrak h: (A, Q, \tuple a) \rightarrow (C, K, \tuple b)$ that sends $\tuple a$ into $\tuple b$. Now let $Q'' = \mathfrak h^{-1}(K)$ be the inverse image of $K$ under this bijective homomorphism: then $(A, Q'', \tuple a)$ is isomorphic to $(C, K, \tuple b)$, and thus $(A, Q'', \tuple a) \models \forall \tuple x(Q'' \tuple x \rightarrow \theta(\tuple x, \tuple a)) \wedge \lnot \D(Q'')$; and furthermore, since $\mathfrak h$ is a homomorphism, we have at once that $Q \subseteq Q''$. 
 
 Therefore, the model $(A, Q, \tuple a)$ can be expanded to a model $(A, Q, Q'', \tuple a)$ such that $Q \subseteq Q''$, $\forall \tuple x(Q'' \tuple x \rightarrow \theta(\tuple x, \tuple a))$ and $\lnot \D(Q'')$. But $(A, Q, \tuple a)$ is elementarily equivalent to $(M, R, \tuple a)$, which is $\omega$-big. Therefore $(M, R, \tuple a)$ can also be expanded to some $(M, R, R'', \tuple a)$ which is elementarily equivalent to $(A, Q, Q'', \tuple a)$ and in which thus $R''$ likewise contains $R$, contains only tuples $\tuple m $ such that $\theta(\tuple m, \tuple a)$ (and, therefore, is contained in $R'$), and does not satisfy $\D(R'')$. This is however impossible, because we said that no such $R''$ exists; and therefore $\Psi \cup \{\forall \tuple x(R \tuple x \rightarrow \theta(\tuple x, \tuple a)), \lnot \D(R)\}$ is indeed unsatisfiable. Figure \ref{figure1} illustrates the key steps of this argument.

 By compactness, this implies that there exists a finite $\Psi_0 \subseteq \Psi$ such that, for $\psi^+ = \bigwedge \Psi_0$, 
 \begin{enumerate}
     \item $M, R, \tuple a \models \psi^+(R, \tuple a) \wedge \forall \tuple x (R \tuple x \rightarrow \theta(\tuple x, \tuple a))$; 
     \item $\exists \tuple z(\psi^+(R, \tuple z) \wedge \forall \tuple x(R \tuple x \rightarrow \theta(\tuple x, \tuple z))) \models \D(R)$. 
 \end{enumerate}
\end{proof}
Then, exploiting the fact that by Theorem \ref{thm:bigexists} any model is elementarily equivalent to some $\omega$-big model, we can show that $\D$ itself is equivalent (over \emph{all} models) to some first order sentence of a particular form: 
\begin{theorem}
	Let $\D$ be a strongly first order, relativizable, non-jumping dependency. Then there exists a formula $\psi^+(R, \tuple z)$ over the signature $\{R\}$, positive in $R$, and a formula $\theta(\tuple x, \tuple z)$ over the empty signature such that, for all $M$ and $R$, 
	\[
		(M, R) \in \D \Leftrightarrow (M, R) \models \exists \tuple z(\psi^+(R, \tuple z) \wedge \forall \tuple x(R \tuple x \rightarrow \theta(\tuple x, \tuple z)).
	\]
	\label{thm:psitheta}
\end{theorem}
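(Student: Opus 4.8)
The plan is to combine Lemma~\ref{lemma:bigpsitheta} --- which already supplies a pair $(\psi^+,\theta)$ of the required shape for every $\omega$-big model lying in $\D$ --- with a compactness argument that collapses the a priori unbounded supply of such pairs down to a single one, and then with a small amount of syntactic bookkeeping to merge finitely many sentences of the prescribed form into one. First I would recall that a strongly first order dependency is in particular first order in the sense of Definition~\ref{def:FOdep}, so there is a first order sentence $\D(R)$ over $\{R\}$ with $(M,R)\in\D\Leftrightarrow(M,R)\models\D(R)$; in particular the class $\D$ is closed under elementary equivalence, which is the property that will let us push the conclusion of Lemma~\ref{lemma:bigpsitheta} from $\omega$-big models down to arbitrary ones.

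Let $\mathcal F$ be the set of all first order sentences $\sigma$ over $\{R\}$ of the form $\exists\tuple z(\psi^+(R,\tuple z)\wedge\forall\tuple x(R\tuple x\rightarrow\theta(\tuple x,\tuple z)))$, with $\psi^+$ positive in $R$ and $\theta$ over the empty signature, that moreover satisfy $\sigma\models\D(R)$ over all models. By construction every member of $\mathcal F$ implies $\D(R)$. Conversely I would show that every model of $\D(R)$ satisfies some member of $\mathcal F$: given $(M,R)\in\D$, use Theorem~\ref{thm:bigexists} to pick an $\omega$-big elementary extension $(M^*,R^*)\succeq(M,R)$; since $\D$ is closed under elementary equivalence we have $(M^*,R^*)\in\D$, so Lemma~\ref{lemma:bigpsitheta} yields $\psi^+,\theta$ with $(M^*,R^*)\models\sigma$ and $\sigma\models\D(R)$ for $\sigma:=\exists\tuple z(\psi^+(R,\tuple z)\wedge\forall\tuple x(R\tuple x\rightarrow\theta(\tuple x,\tuple z)))$, hence $\sigma\in\mathcal F$; and since $\sigma$ is first order and $(M,R)\equiv(M^*,R^*)$, also $(M,R)\models\sigma$. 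Consequently $\{\D(R)\}\cup\{\lnot\sigma:\sigma\in\mathcal F\}$ is unsatisfiable, so by compactness there are $\sigma_1,\dots,\sigma_n\in\mathcal F$ with $\D(R)\models\sigma_1\vee\cdots\vee\sigma_n$; combined with $\sigma_i\models\D(R)$ for each $i$ this gives $(M,R)\in\D\Leftrightarrow(M,R)\models\sigma_1\vee\cdots\vee\sigma_n$.

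It then remains to check that a finite disjunction of sentences of the prescribed shape is again equivalent to a single sentence of that shape. For two disjuncts $\exists\tuple z_1(\psi_1^+\wedge\forall\tuple x(R\tuple x\rightarrow\theta_1))$ and $\exists\tuple z_2(\psi_2^+\wedge\forall\tuple x(R\tuple x\rightarrow\theta_2))$ I would introduce two fresh existential ``selector'' variables $u,v$ and set $\psi^+:=(u=v\wedge\psi_1^+(R,\tuple z_1))\vee(u\ne v\wedge\psi_2^+(R,\tuple z_2))$, which is still positive in $R$, and $\theta:=(u=v\rightarrow\theta_1(\tuple x,\tuple z_1))\wedge(u\ne v\rightarrow\theta_2(\tuple x,\tuple z_2))$, which is still over the empty signature; one verifies directly --- using that the model has at least two elements, as assumed throughout, to realize $u\ne v$ and to choose dummy values for the unused $\tuple z_j$ --- that $\exists\tuple z_1\tuple z_2\,u\,v\,(\psi^+\wedge\forall\tuple x(R\tuple x\rightarrow\theta))$ is equivalent to the disjunction. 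Iterating this $n-1$ times merges $\sigma_1\vee\cdots\vee\sigma_n$ into a single sentence of the required form, completing the proof.

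The substantive content is all in Lemma~\ref{lemma:bigpsitheta}, which is already established; the points to be careful about here are just that $\D$ is an elementary class (so the pair found for the $\omega$-big extension transfers back by elementary equivalence of first order sentences) and that neither the compactness reduction nor the disjunction-merging destroys the special form ``$\psi^+$ positive in $R$, conjoined with an $R$-guarded universal first order constraint over the empty signature''. I expect the fiddliest --- though entirely routine --- step to be the syntactic merging of the $\sigma_i$, specifically arranging the case split via the selector variables so that $\psi^+$ stays positive in $R$ and $\theta$ stays over the empty signature.
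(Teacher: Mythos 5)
Your proposal is correct and follows essentially the same route as the paper: take an $\omega$-big elementary extension, apply Lemma~\ref{lemma:bigpsitheta}, transfer back by elementary equivalence, use compactness to extract a finite disjunction of sentences of the prescribed form, and then merge the disjuncts with selector variables (the paper does this with an $n$-way selector built from tuples of length $\lceil\log_2 n\rceil$ rather than your iterated binary merge, and indexes the unsatisfiable theory by countable models plus L\"owenheim--Skolem rather than by all of $\mathcal F$, but these are inessential variations). Both arguments share the same reliance on models having at least two elements for the merging step, which the paper acknowledges in a footnote.
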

\begin{proof}
	For all countable $M$ and all $R \subseteq M^k$ such that $(M, R) \in \D$, let $(M_1, R_1)$ be a $\omega$-big  elementary extension of it. Then $(M_1, R_1) \in \D$ as well, since $\D$ is first order definable, and therefore by Lemma \ref{lemma:bigpsitheta} there exist some formulas 
	$\psi^+_{M, R}$ and $\theta_{M, R}$ (R positive in $\psi^+$, not appearing in $\theta$) such that
	\begin{enumerate}
	    \item $(M_1, R_1)$ (and therefore $(M, R)$ as well, since it is elementarily equivalent to it) satisfies \\$\exists \tuple z(\psi^+_{M,R}(R, \tuple z) \wedge \forall \tuple x(R \tuple x \rightarrow \theta_{M,R}(\tuple x, \tuple z)))$;
	    \item $\exists \tuple z(\psi^+_{M,R}(R, \tuple z) \wedge \forall \tuple x(R \tuple x \rightarrow \theta_{M,R}(\tuple x, \tuple z))) \models \D(R)$. 
	\end{enumerate}
	
	Now let  
	\[
		T = \{\D(R)\} \cup \{\lnot \exists \tuple z (\psi^+_{M, R}(R, \tuple z) \wedge \forall \tuple x (R \tuple x \rightarrow \theta_{M, R}(\tuple x, \tuple z))): (M, R) \in \D, M \text{ countable}\}.
	\]
	This theory is unsatisfiable: indeed, otherwise by L\"owenheim-Skolem it would have a countable model $(M, R)$, and since $(M, R) \in \D$ we would have that $(M, R) \models \exists \tuple z (\psi^+_{M, R}(R, \tuple z) \wedge \forall \tuple x (R \tuple x \rightarrow \theta_{M, R}(\tuple x, \tuple z))$. Thus $(M, R)$ would not be a model of $T$, contradicting our hypothesis. 

	So by compactness there exist formulas $\psi_i^+(R, \tuple z_i), \theta_i(\tuple x, \tuple z_i)$, $i=1 \ldots n$  such that
	\[
		\D(R) \equiv \bigvee_{i=1}^n \exists \tuple z_i(\psi_i^+(R, \tuple z_i) \wedge \forall \tuple x(R \tuple x \rightarrow \theta_i(\tuple x, \tuple z_i))). 
	\]
	
	But then $\D(R)$ is also equivalent to 
	\[
		\exists \tuple q_1 \ldots \tuple q_n \tuple p \tuple z_1 \ldots \tuple z_n \left( \psi^+_0(R, \tuple q, \tuple p,\tuple z_1 \ldots \tuple z_n)
		 \wedge 
		\forall \tuple x \left((R \tuple x \rightarrow \theta_0(\tuple x, \tuple q, \tuple p, \tuple z_1 \ldots \tuple z_n)\right)
		\right)
	\]
	for $\psi^+_0 = \left(
		\bigwedge_{i\not =j} \tuple q_i \not = \tuple q_j \wedge 
		\bigvee_i \tuple p = \tuple q_i \wedge 
		\bigwedge_i (\tuple p = \tuple q_i \rightarrow \psi_i^+(R, \tuple z_i))\right)$ and $\theta_0 = \bigwedge_i (\tuple p = \tuple q_i \rightarrow \theta_i(\tuple x, \tuple z_i))$, where $\tuple p$ and all $\tuple q_i$ are tuples of distinct, new variables of length $\lceil \log_2(n)\rceil$. 
\end{proof}
We are now almost done. All that's left to do is to show that any first order dependence that is equivalent to some sentence of the above form is definable in terms of global disjunctions, constancy atoms, and first order upwards closed dependencies:
\begin{corollary}
	Every strongly first order, relativizable, non-jumping dependency is definable in terms of first order upwards closed dependencies, constancy atoms and global disjunctions. 
\end{corollary}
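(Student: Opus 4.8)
The plan is to feed the normal form from Theorem~\ref{thm:psitheta} directly into a team-semantics translation. That theorem gives a formula $\psi^+(R,\tuple z)$ positive in $R$ and a formula $\theta(\tuple x,\tuple z)$ over the empty signature with $(M,R)\in\D$ iff $(M,R)\models\exists\tuple z\,(\psi^+(R,\tuple z)\wedge\forall\tuple x\,(R\tuple x\rightarrow\theta(\tuple x,\tuple z)))$, so the task reduces to: express a dependency whose first-order definition has this shape using only constancy atoms, upwards closed first order dependencies, and $\sqcup$. The idea is to produce the witness $\tuple z$ by an existential quantifier guarded by a constancy atom, to realize the universal conjunct simply by the first-order formula $\theta$ evaluated on the team, and to realize the positive conjunct $\psi^+$ by a carefully designed upwards closed first order dependency; a single use of $\sqcup$ then repairs the behaviour on the empty team.

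Concretely, I would take $\phi(\tuple y):=\exists\tuple z\,\big(=\!\!(\tuple z)\wedge\theta(\tuple y,\tuple z)\wedge\E(\tuple y,\tuple z)\big)$, where $\E$ is the dependency of arity $|\tuple y|+|\tuple z|$ defined by $(M,S)\in\E$ iff $(M,S)\models\exists\tuple z\,(\widehat\psi(S,\tuple z)\wedge\exists\tuple x\,S\tuple x\tuple z)$ and $\widehat\psi(S,\tuple z)$ is $\psi^+(R,\tuple z)$ with every atom $R\tuple t$ rewritten as $S\tuple t\tuple z$. Since $\psi^+$ is positive in $R$, this defining sentence is positive in $S$, so $\E$ is upwards closed and first order, hence $\E\in\DD^\uparrow$. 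For a nonempty team $X$ the verification runs through three points, each routine once set up: (i) $=\!\!(\tuple z)$ forces the team after $\exists\tuple z$ to be $X[\tuple a/\tuple z]$ for a single tuple $\tuple a$, so that $X[\tuple a/\tuple z](\tuple y\tuple z)=\{(\tuple r,\tuple a):\tuple r\in X(\tuple y)\}$; (ii) because $\theta$ is first order, Proposition~\ref{propo:FOL} makes $\M\models_{X[\tuple a/\tuple z]}\theta(\tuple y,\tuple z)$ equivalent to $\theta(\tuple r,\tuple a)$ holding of all $\tuple r\in X(\tuple y)$, i.e.\ exactly the universal conjunct; (iii) on a relation of the form $S=\{(\tuple r,\tuple a):\tuple r\in R\}$ with $R\neq\emptyset$, the conjunct $\exists\tuple x\,S\tuple x\tuple z$ forces the quantified $\tuple z$ to equal $\tuple a$, and then $\widehat\psi(S,\tuple a)$ is exactly $\psi^+(R,\tuple a)$, so $(M,S)\in\E$ iff $\psi^+(R,\tuple a)$. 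Combining these, $\M\models_X\phi(\tuple y)\Leftrightarrow(M,X(\tuple y))\in\D$ for every nonempty $X$.

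The empty team is handled separately, and this is where $\sqcup$ is genuinely needed. On the empty team $\phi$ always fails (because $\E$ never holds of the empty relation), whereas we need $\M\models_\emptyset\D\tuple y$ iff $(M,\emptyset)\in\D$, which by Theorem~\ref{thm:psitheta} amounts to $M\models\chi_\emptyset$ for the empty-signature sentence $\chi_\emptyset:=\exists\tuple z\,\psi^+(\emptyset,\tuple z)$ (the formula $\psi^+$ with its $R$-atoms replaced by $\bot$). One cannot test $M\models\chi_\emptyset$ by evaluating $\chi_\emptyset$ on the empty team, since first-order sentences hold there vacuously, so I would instead use the dependency $\D_{\chi_\emptyset}:=\{(M,S):M\models\chi_\emptyset\}$, which is trivially upwards closed (it ignores $S$) and first order, and set $\D\tuple y\equiv\phi(\tuple y)\sqcup\big(\D_{\chi_\emptyset}\tuple y\wedge\bot\big)$; the added disjunct is true of $X$ exactly when $X=\emptyset$ and $M\models\chi_\emptyset$. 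A short case split on whether $X$ is empty then completes the proof, with all ingredients in $\FO(\DD^\uparrow,=\!\!(\cdot),\sqcup)$. As a sanity check, for $\mathbf U$ one has $\chi_\emptyset=\top$ and $\phi(v)\equiv\texttt{All}(v)$, recovering the known $\mathbf U v\equiv\bot\sqcup\texttt{All}(v)$.

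I expect the main obstacle to be the design of $\E$: one needs a \emph{fixed} class of relations that is simultaneously upwards closed and, on the product-shaped relations actually produced by a constant $\tuple z$, reports $\psi^+(R,\tuple z)$ faithfully. Dropping the conjunct $\exists\tuple x\,S\tuple x\tuple z$ breaks this — upwards closure would then force $\E$ to be satisfied by too many values of $\tuple z$ (for instance whenever $\psi^+(\emptyset,\cdot)$ is satisfiable), and the coupling with $\theta$ would be lost; the monotonicity of $\psi^+$ is precisely what legitimizes that conjunct. The secondary subtlety — that the empty team cannot detect $\chi_\emptyset$ — is what forces the extra $\sqcup$-disjunct and the auxiliary dependency $\D_{\chi_\emptyset}$, and explains why global disjunction is unavoidable here, in line with the failure of Conjecture~\ref{conj:1}.
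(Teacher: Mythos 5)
Your proposal is correct and follows essentially the same route as the paper: it feeds the normal form of Theorem~\ref{thm:psitheta} into a formula $\exists \tuple z(=\!\!(\tuple z) \wedge \E\tuple x\tuple z \wedge \theta(\tuple x,\tuple z))$ with an auxiliary upwards closed first order dependency $\E$ encoding $\psi^+$ (your slice-based definition of $\E$ is a harmless variant of the paper's projection-based one, since the constancy atom makes them agree on the relevant product-shaped relations), and handles the empty team via a separate $\sqcup$-disjunct of the form $\F\tuple x \wedge \bot$ with $\F$ testing $\exists\tuple z\,\psi^+(\emptyset,\tuple z)$, exactly as in the paper.
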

\begin{proof}
	Let $\D$ be such a dependency of arity $k$. By the previous theorem, $\D(Q)$ is equivalent to some expression of the form $\exists \tuple z (\psi^+(Q, \tuple z) \wedge \forall \tuple x (Q \tuple x \rightarrow \theta(\tuple x, \tuple z)))$, where the $k$-ary variable $Q$ occurs only positively in $\psi^+$ and not at all in $\theta$. Now, if $l$ is the length of $\tuple z$, consider the $(k+l)$-ary first order upwards closed dependency $\E$ 
%such that 
%		$\M \models_X \E \tuple x \tuple z \Leftrightarrow \exists \tuple a \in X(\tuple z) \text{ s.t. } \psi^+(X(\tuple x), \tuple a)$, 
	such that, for any model $\M$ and any team $X$, $\M \models_X \E \tuple x \tuple z$ if and only if $X \not = \emptyset$ and there is some tuple $\tuple a \in X(\tuple z)$ such that $\psi^+(X(\tuple x), \tuple a)$. This $\E$ is upwards closed (since the relation symbol appears only positively in $\psi^+$), and it is first order, because 	
	\[
		(M,R) \in \E \Leftrightarrow (M, R) \models \exists \tuple u \tuple v(R \tuple u \tuple v \wedge \psi^+[\exists \tuple w R \tuple t \tuple w/ Q \tuple t](\tuple v))
	\]
	where $|\tuple u| = k$, $|\tuple v| = |\tuple w| = l$ and  $\psi^+[\exists \tuple w R \tuple t \tuple w/Q \tuple t](\tuple v)$ is obtained from $\psi^+(Q, \tuple v)$ by replacing every occurrence $Q \tuple t$ of $Q$ in it (for every $\tuple t$) with $\exists \tuple w R \tuple t \tuple w$. Therefore $\E$ is strongly first order. 

Moreover, let $\F$ be another dependency, of the same arity of $\D$, defined as 
\[
	(M, R) \in \F \Leftrightarrow M \models \exists \tuple v \psi^+(\emptyset, \tuple v).
\]
Then $\F$ is trivially upwards closed (in fact, the relation $R$ does not affect membership or non-membership of $(M, R)$ in $F$) and first order, and hence it is also strongly first order.

Now, $\D\tuple x$ is definable in $\FO(\E, \F, =\!\!(\cdot), \sqcup)$ as 
\begin{equation}
	(\F \tuple x \wedge \bot) \sqcup (\exists \tuple z (=\!\!(\tuple z) \wedge \E \tuple x \tuple z \wedge \theta(\tuple x, \tuple z))).
	\label{eq:defD}
\end{equation}

Indeed, suppose that some team $X$ satisfies  (\ref{eq:defD}) in some model $\M$. Then either $\M \models_X \F \tuple x \wedge \bot$ or $\M \models_X \exists \tuple z (=\!\!(\tuple z) \wedge \E \tuple x \tuple z \wedge \theta(\tuple x, \tuple z))$. In the first case, $X = \emptyset$, because $\M \models_X \bot$, and $\M \models \psi^+(\emptyset, \tuple a)$ for some $\tuple a$, because $(M, \emptyset) \in \F$. Furthermore, for $Q = X(\tuple x) = \emptyset$ we have trivially that $\M \models \forall \tuple x(Q  \tuple x \rightarrow \theta(\tuple x, \tuple a))$, and hence $(M, X(\tuple x)) = (M, \emptyset) \in \D$ as required. 

	In the second case, instead, there exists some tuple $\tuple a$ such that $\M \models_{X[\tuple a/\tuple z]} \E \tuple x \tuple z \wedge \theta(\tuple x, \tuple z)$. Since $\M \models_{X[\tuple a/\tuple z]} \E \tuple x \tuple z$, $X \not = \emptyset$ and there is some possible value $\tuple a' \in X[\tuple a/\tuple z](\tuple z)$ such that, for $Q = X(\tuple x)$, $(M, Q) \models \psi^+(Q, \tuple a')$. But $\tuple z$ takes only the value $\tuple a$ in $X[\tuple a/\tuple z]$, and hence $\tuple a' = \tuple a$. Furthermore, since $\M \models_{X[\tuple a/\tuple z]} \theta(\tuple x, \tuple z)$, by Proposition \ref{propo:FOL} we have that for every $s \in X[\tuple a/\tuple z]$ $\M \models_s \theta(\tuple x, \tuple z)$. But for every $\tuple b \in Q$ there is some $s \in X[\tuple a/\tuple z]$ with $s(\tuple x) = \tuple b$ and $s(\tuple z) = \tuple a$, and so $(M, Q) \models \forall \tuple x (Q \tuple x \rightarrow \theta(\tuple x, \tuple a))$; and therefore, $(M, Q) \in \D$ and $\M \models_X \D \tuple x$ as required.

Conversely, suppose that $(M, Q) \in \D$ for $Q = X(\tuple x)$. This implies that there exists some $\tuple a$ such that $\psi^+(Q, \tuple a) \wedge \forall \tuple x (Q \tuple x \rightarrow \theta(\tuple x, \tuple a))$. If $Q = \emptyset$ then $X = \emptyset$ as well and $(M, \emptyset) \in \F$, which implies that $\M \models_\emptyset \F \tuple x \wedge \bot$ and that (\ref{eq:defD}) holds. If instead $X$ and $Q$ are nonempty, let $Y  = X[\tuple a / \tuple z]$. Then $\M \models_Y =\!\!(\tuple z)$, because $\tuple z$ is constant in $Y$; $\M \models_Y \E \tuple x\tuple z$, because for $Q = X(\tuple x) = Y(\tuple x)$ we have that $\psi^+(Q, \tuple a)$ and $Y(\tuple z) = \{\tuple a\}$; and, for all $\tuple b \in Q$, we have that $\theta(\tuple b, \tuple a)$ and hence $\M \models_Y \theta(\tuple x, \tuple z)$. Hence again (\ref{eq:defD}) holds, and this concludes the proof.
\end{proof}

The above result provides a full characterization of strongly first order dependencies that are relativizable and non-jumping. I suspect that this result may be further generalized to jumping dependencies as a consequence of the following\\

\begin{conjecture}
Every strongly first order dependency $\D(R)$ can be expressed as a disjunction $\bigvee_i \D_i(R)$ of dependencies $\D_i$ that are strongly first order \emph{and} non-jumping.
\label{conj:3}
\end{conjecture}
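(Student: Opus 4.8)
The plan is to split $\D$ according to the \emph{shape} of the maximal relations lying above its members. By the Proposition preceding Theorem~\ref{thm:emaxdef}, $\Dmax$ is strongly first order and every $(M,R)\in\D$ has some $R'\supseteq R$ with $(M,R')\in\Dmax$; and when $\D$ is relativizable, Theorem~\ref{thm:emaxdef} supplies finitely many first order formulas $\theta_1(\tuple x,\tuple z),\dots,\theta_n(\tuple x,\tuple z)$ over the empty signature such that every $R'\in\Dmax$ has the form $\{\tuple m:\theta_i(\tuple m,\tuple a)\}$ for some $i$ and some parameter tuple $\tuple a$. Given these, the first attempt is to set, for each $i$,
\[
\D_i=\{(M,R):\text{there is }\tuple a\text{ with }R\subseteq R':=\{\tuple m:\theta_i(\tuple m,\tuple a)\},\ (M,R')\in\Dmax,\ \text{and }(M,S)\in\D\text{ whenever }R\subseteq S\subseteq R'\}.
\]
One then wants to show that $\D=\bigcup_i\D_i$, that each $\D_i$ is non-jumping, and that each $\D_i$ is strongly first order; the non-relativizable case would in addition need a shape theorem for $\Dmax$ not obtained through $\FO(\D^{(P)})$, which I come back to at the end.

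Two of the three requirements go through cleanly. Each $\D_i$ is \emph{non-jumping}: if $(M,R)\in\D_i$ with witness $R'$, then $(M,R')\in\D_i$ (witnessed by $R'$ itself, since $R'\in\Dmax\subseteq\D$), $R'$ is maximal for $\D$ hence also for $\D_i\subseteq\D$, and for every $S$ with $R\subseteq S\subseteq R'$ the same $R'$ and $\tuple a$ witness $(M,S)\in\D_i$, because $[S,R']\subseteq[R,R']\subseteq\D$; so $R'$ verifies the non-jumping condition simultaneously for every member of the interval $[R,R']$. Each $\D_i$ is \emph{strongly first order}: membership forces $R\subseteq\{\theta_i(\cdot,\tuple a)\}$ and $(M,R)\in\D$, so --- exactly as in the proof of the Corollary above --- $\D_i\tuple x$ is expressible in $\FO(\D,=\!\!(\cdot),\EE_i)$ for a suitable first order upwards closed dependency $\EE_i$ encoding ``$\{\theta_i(\cdot,\tuple z)\}\in\Dmax$'', and that language is strongly first order. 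Since each $\D_i$ is defined from $\D$, constancy atoms and a first order upwards closed dependency, the $\D_i$ should moreover inherit relativizability from $\D$, which is what is needed in order to then feed them into the Corollary and obtain Conjecture~\ref{conj:1bis} for relativizable, possibly jumping, strongly first order dependencies.

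The hard part is the remaining requirement, and in fact $\D=\bigcup_i\D_i$ \emph{fails} for the $\D_i$ as just defined, precisely because $\D$ may jump. Take $\D=\{(M,\emptyset)\}\cup\{(M,R_M)\}$ with $R_M=\{(a,b):a\neq b\}$, which is strongly first order ($\D\tuple{xy}\equiv\bot\sqcup((x\neq y)\wedge\texttt{AllNeq}(\tuple{xy}))$, where $\texttt{AllNeq}$ is the first order upwards closed atom ``$X(\tuple{xy})$ contains every pair of distinct elements''): here $\Dmax$ consists of the relations $R_M$, the only available shape is $\theta(xy)=(x\neq y)$, and the unique $\D_i$ turns out to be $\{(M,R_M)\}$, so the member $(M,\emptyset)$ --- every one of whose maximal extensions has a ``bad'' relation strictly in between --- is left out, even though $\D$ does decompose, into $\{(M,\emptyset)\}$ (equivalent to $\bot$) and $\{(M,R_M)\}$. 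So the pieces must be refined so as to also produce such ``thin'' pieces. One natural possibility is to cut $\D$ additionally along first order $\equiv_k$-type, for $k$ above the quantifier ranks of $\D(R)$ and $\Dmax(R)$, writing $\D$ as the union of the finitely many complete type classes $[t]\subseteq\D$ it contains, and to argue that each such $[t]$ --- being thin in precisely the directions along which $\D$ jumps --- is itself non-jumping; another is to peel off $\Dmax$ repeatedly, setting $\D^{(0)}=\Dmax$ and $\D^{(m+1)}=(\D\setminus\bigcup_{j\le m}\D^{(j)})_{\mathbf{max}}$, and to use first-orderness of $\D$ to see that this terminates after finitely many stages, each non-jumping.

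The main obstacle, with either refinement, is showing that the finer pieces stay \emph{strongly} first order: a single $\equiv_k$-type class, or a single peeling layer, is automatically first order but need not obviously inherit strong first-orderness from $\D$, and a genuinely new argument seems to be needed here --- plausibly a local/compactness argument in the spirit of Lemma~\ref{lemma:bigpsitheta}, exploiting that the jump structure of a first order dependency is itself ``first order controlled''. The delicacy of strong first-orderness is illustrated by atoms such as ``$X(\tuple{xy})$ is a linear order of $M$'', which is first order as a property of relations but whose effect on the expressive power of $\FO$ is far from transparent. This, together with the missing shape theorem in the non-relativizable case, is why the statement is stated only as a conjecture; a proof of Conjecture~\ref{conj:2} would let one drop relativizability outright and concentrate entirely on the jump structure.
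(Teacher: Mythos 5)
This statement is Conjecture~\ref{conj:3}: the paper states it as an \emph{open problem} and offers no proof of it, so there is no argument of the author's to compare yours against. Your text is, accordingly, not a proof but an exploration that ends by conceding the main difficulties, and on that level it is largely sound: the counterexample $\D=\{(M,\emptyset)\}\cup\{(M,R_M)\}$ correctly shows that the naive shape-based pieces $\D_i$ do not cover $\D$ when $\D$ jumps, and the obstacles you name at the end (strong first-orderness of the refined pieces; the lack of a shape theorem without relativizability) are exactly the kind of thing that keeps this a conjecture.

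However, the two requirements you claim ``go through cleanly'' do not. First, your definability sketch for $\D_i$ only encodes ``$R$ is contained in some $\Dmax$-relation of shape $\theta_i$ and $(M,R)\in\D$''; it silently drops the interval condition ``$(M,S)\in\D$ for all $S$ with $R\subseteq S\subseteq R'$'', which is precisely the clause your non-jumping argument relies on. Without it the defined class is strictly larger than $\D_i$ (your own counterexample witnesses this: $(M,\emptyset)$ satisfies the two retained conditions but not the third), and with it you are asking team semantics --- whose formulas define $\Sigma^1_1$ properties --- to express a universal second-order quantification over all $S$ in an interval; it is not even clear that $\D_i$ is first order as a class of models, which is a necessary condition for being strongly first order. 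Second, even granting a definition of $\D_i$ in $\FO(\D,=\!\!(\cdot),\EE_i)$, concluding that $\D_i$ is strongly first order requires the \emph{joint} family $\{\D,=\!\!(\cdot),\EE_i\}$ to be strongly first order. That is a safety question, and the paper explicitly warns that it is delicate: the constancy atom is strongly first order yet not safe for the unary inclusion atom, and Theorem 3.8 of \cite{galliani19characterizing} only covers safety for \emph{downwards closed} families, which $\EE_i$ is not. So the gap is not confined to the refinement step at the end; it already sits inside the part of the argument you present as settled.
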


\noindent Furthermore, if Conjecture \ref{conj:2} holds, the requirement of relativizability may be also disposed of. %This however, is left for future work. 
\section{Conclusions and Further Work}
In this work I provided a full characterization of strongly first order dependencies in Team Semantics under the two (commonly true) additional assumptions that these dependencies are relativizable and non-jumping; and, in doing so, I disproved an earlier conjecture regarding a general characterization of strongly first order dependencies and highlighted the importance of the global disjunction connective in the study of Team Semantics. The obvious next step consists in trying to find ways to remove or weaken these assumptions, for instance by proving Conjectures \ref{conj:2} and/or \ref{conj:3}. Another research direction worth investigating at this point is to generalize this approach to the study of \emph{operators} (rather than just \emph{dependencies}) in Team Semantics, building on the work on generalized quantifiers in Team Semantics of \cite{barbero19,engstrom12,kuusisto2015}. 

It would also be interesting to study axiomatizations for First Order Logic plus strongly first order dependencies. The procedure to translate from $\FO(\mathcal D^\uparrow, =\!\!(\cdot))$ to $\FO$ described in \cite{galliani2015upwards} is deterministic and ends in finitely many steps for all formulas, and therefore if Conjecture \ref{conj:1bis} holds then it should be possible to extend the proof system for logics based on Team Semantics of \cite{luck2018axioms} or of \cite{engstrom2017dependence} to deal with all such dependencies. 

Finally, the question of whether and to which degree these result generalize to probabilistic variants of Team Semantics is almost entirely open. Such variants allow an even larger variety of possible choices of connectives and atoms than non-probabilistic Team Semantics does; and, therefore, the classification of logics based on Probabilistic Team Semantics promises to be an intriguing and highly nontrivial area of research.
\bibliographystyle{eptcs}
\bibliography{biblio}
\end{document}